\newtheorem{Thm}{Theorem}[section]
\newtheorem{Lem}[Thm]{Lemma}
\newtheorem{Prob}[Thm]{Problem}
\theoremstyle{definition}
\newtheorem{Ques}[Thm]{Question}
\theoremstyle{remark}
\newtheorem{Rk}[Thm]{Remark}
\def\C{{\mathbb C}}
\def\F{{\mathbb F}}
\def\P{{\mathbb P}}
\def\Q{{\mathbb Q}}
\begin{document}
\title[Jordan constants of abelian surfaces over finite fields]{Jordan constants of abelian surfaces over finite fields}

\author[W. Hwang and B.-H. Im]{WonTae Hwang and Bo-Hae Im}
\date{\today}
\subjclass[2010]{20E07, 11R52, 11G10, 14G17, 14K02}
\keywords{abelian surface, Jordan group, Jordan constant}
\thanks{WonTae Hwang was supported by research funds for newly appointed professors of Jeonbuk National University in 2021. Bo-Hae Im was supported by the National Research Foundation of Korea(NRF) grant funded by the Korea government(MSIT) (No.~2020R1A2B5B01001835).}
\address{WonTae Hwang, Department of Mathematics, and Institute of Pure and Applied Mathematics, Jeonbuk National University, Baekje-daero, Deokjin-gu, Jeonju-si, Jeollabuk-do, 54896, South Korea}
\email{hwangwon@jbnu.ac.kr}
\address{Bo-Hae Im, Dept. of Mathematical Sciences, KAIST, 291 Daehak-ro, Yuseong-gu, Daejeon, South Korea, 34141}
\email{bhim@kaist.ac.kr}

\begin{abstract}
We compute the exact values of the Jordan constants of abelian surfaces over finite fields.
\end{abstract}

\maketitle

\begin{center}
{\it Dedicated to Professor Michael J. Larsen \\on the occasion of his 60th birthday}
\end{center}

\section{Introduction}\label{intro}


Given a group $G$, there are various properties of $G$, in which one is usually interested. Among them, the notion of $G$ being a \emph{Jordan group} was introduced by Vladimir Popov in \cite{Popov2011}. We recall that $G$ is called a \emph{Jordan group} (or said to \emph{satisfy the Jordan property}) if there exists an integer $d>0$ such that every finite subgroup $H$ of $G$ contains a normal abelian subgroup whose index in $H$ is less than or equal to $d.$ The minimal such an integer $d$ is called the \emph{Jordan constant of $G$} and is denoted by $J_{G}$. As a motivating example, a classical result of Camille Jordan says that for an integer $n \geq 1$ the general linear group $\textrm{GL}_n(k)$ over an algebraically closed field $k$ of characteristic zero is a Jordan group, which, in turn, implies that every affine algebraic group is also a Jordan group. Other interesting examples arise in connection with (birational) algebraic geometry. Let $X$ be an irreducible algebraic variety over an algebraically closed field $k$ of characteristic zero, and let $G=\textrm{Bir}(X)$, the group of birational automorphisms of $X$. We might ask whether $G$ is a Jordan group or not. If $X$ is a smooth projective rational variety of dimension $n$ (i.e.\ $X$ is birational to the projective space $\P_k^n$), then $G=\textrm{Cr}_n(k),$ the \emph{Cremona group of degree $n$ over $k,$} and it was known that $\textrm{Cr}_n (k)$ is a Jordan group for $n=1$, $2$ (see \cite[Theorem 5.3]{Ser2009}), and $n=3$ (see \cite[Corollary 1.9]{PS2016}). Yuri Prokhorov and Constantin Shramov \cite{PS2016,PS2017} showed (modulo the Borisov-Alexeev-Borisov conjecture, see \cite[Conjecture 1.7]{PS2016}, recently proved by Caucher Birkar \cite{CB2016}) that $\textrm{Cr}_n$ is indeed a Jordan group for every $n > 3$, and computed an upper bound for the Jordan constants of the Cremona groups of rank $2$ and $3.$ The exact value of the Jordan constant of $\textrm{Cr}_2 (k)$ was obtained by Egor Yasinsky \cite{Ya2017}. As for the case of the automorphism groups, Sheng Meng and De-Qi Zhang \cite{MD2018} proved that the full automorphism group of any projective variety over $k$ is a Jordan group, using algebraic group theoretic arguments. There are also some results on the criterion for determining the Jordaness of $G=\textrm{Bir}(X)$ in small dimensions. For example, if $\textrm{dim}~X \leq 2,$ then it is known \cite[Theorem 2.32]{Popov2011} that the group $\textrm{Bir}(X)$ is a Jordan group if and only if $X$ is not birationally isomorphic to $\P^1_k \times E$ where $E$ is an elliptic curve over $k.$ In addition, we have a similar but slightly more complicated result for the case of $X$ being a threefold \cite[Theorem 1.8]{PS2018}. \\
\indent On the other hand, it seems to the authors that less is known in the case when the base field $k$ is not algebraically closed or the characteristic of the base field $k$ is positive. As an example along this line, Fei Hu \cite{Hu2020} showed two Jordan-type properties (one of which is essentially motivated by a beautiful work of Michael Larsen and Richard Pink \cite{LP2011}) of the automorphism groups of any projective varieties in positive characteristic case. Also, the first author \cite{Hwa2022} (resp.\ \cite{Hwa2023}) computed the rather exact values of the Jordan constants of the multiplicative subgroups of the endomorphism algebras of simple abelian surfaces over fields of positive characteristic (resp.\ simple abelian fourfolds over finite fields). \\

The primary goal of this paper is to extend the results in \cite{Hwa2022} and consider the case of $X$ being an arbitrary abelian surface over a finite field. To this aim, we first recall that there is a classification of finite groups that can be realized as the full automorphism groups of arbitrary polarized abelian surfaces over finite fields (see Section \ref{aut gp} below).
Using this, we can answer one interesting special case of the following problem.
\begin{Prob}\label{problem}
Let $X$ be a (unpolarized) abelian variety over a field $k$ of dimension $g$ with $\textrm{char}(k)=p>0$. Let $\textrm{End}_k^0(X)$ be the endomorphism algebra of $X$ over $k$. \\
(1) Is the multiplicative subgroup of $\textrm{End}_k^0(X)$ a Jordan group? \\
(2) If the answer for (1) is yes, then what is the exact value of its Jordan constant?
\end{Prob}
More precisely, in this paper, we try to answer Problem \ref{problem} for the case when $g=2$ and $k$ is a finite field. To achieve the goal, in view of \cite[Corollary 2 in $\S19$]{Mum2008}, we first consider the cases of $G$ being the multiplicative subgroup of a direct product of two imaginary quadratic fields or an imaginary quadratic field and a totally definite quaternion algebra over $\Q$, or else a power of a totally definite quaternion algebra over $\Q$ to prove that $G$ is a Jordan group, and we also explicitly compute the Jordan constant $J_G$ of $G$ in each case. This gives another example that connects the theory of Jordan groups, algebraic geometry, and algebraic number theory. In this aspect, one of our main results is the following theorem.
\begin{Thm}\label{main theorem1}
Let $E_1$ and $E_2$ be two non-isogenous elliptic curves over a finite field $k=\mathbb{F}_q$ where $q=p^a$ for a prime $p$ with $D_1 = \textrm{End}_k^0(E_1)$ and $D_2 = \textrm{End}_k^0 (E_2)$, and let $X=E_1 \times E_2$ with $D:=\textrm{End}_k^0(X)$. Then $G:=D^{\times}$ is a Jordan group and we have
\begin{equation*}
	J_{G} \in \begin{cases} \{1, 12, 144 \} & \mbox{if $p=2$}; \\ \{1, 2, 4\}  & \mbox{if $p=3$}; \\ \{1\}  & \mbox{if $p \geq 5$} .\end{cases}
	\end{equation*}
\end{Thm}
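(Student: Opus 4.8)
The plan is to reduce the statement to two ingredients: the structure of $D$, and a formula for the Jordan constant of a direct product. Since $E_1$ and $E_2$ are non-isogenous, $\mathrm{Hom}_k(E_1,E_2)=\mathrm{Hom}_k(E_2,E_1)=0$, so $D=D_1\times D_2$ and $G=D_1^{\times}\times D_2^{\times}$. By the Deuring--Waterhouse classification of endomorphism algebras of elliptic curves over finite fields, each $D_i$ is either an imaginary quadratic field or the definite quaternion $\Q$-algebra $B_p$ ramified exactly at $p$ and $\infty$ (the latter only in the supersingular case); in particular the only quaternion algebra that occurs is $B_p$.

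First I would record the direct-product formula $J_{G_1\times G_2}=J_{G_1}\,J_{G_2}$. For ``$\le$'': a finite $H\le G_1\times G_2$ sits inside $H_1\times H_2$, where $H_i$ is its $i$-th projection; choosing $A_i\trianglelefteq H_i$ abelian of minimal index $\le J_{G_i}$, the group $H\cap(A_1\times A_2)$ is abelian and normal in $H$, of index $\le [H_1:A_1][H_2:A_2]\le J_{G_1}J_{G_2}$. For ``$\ge$'': if $H_i^{\ast}\le G_i$ is a finite subgroup whose smallest abelian normal subgroup has index $J_{G_i}$, then any abelian normal $N\trianglelefteq H_1^{\ast}\times H_2^{\ast}$ has abelian normal projections $\pi_i(N)\trianglelefteq H_i^{\ast}$, so $[H_i^{\ast}:\pi_i(N)]\ge J_{G_i}$, and hence $[H_1^{\ast}\times H_2^{\ast}:N]\ge [H_1^{\ast}:\pi_1(N)][H_2^{\ast}:\pi_2(N)]\ge J_{G_1}J_{G_2}$. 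As $J_{K^{\times}}=1$ for every field $K$ (its finite subgroups being cyclic), we get $J_G=J_{D_1^{\times}}\cdot J_{D_2^{\times}}$, and the whole problem collapses to computing $J_{B_p^{\times}}$.

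Then I would invoke Amitsur's classification of finite subgroups of division rings of characteristic $0$. A finite subgroup of $B_p^{\times}$ is either cyclic or nonabelian, and a cyclic subgroup of order $n$ forces $\Q(\zeta_n)\hookrightarrow B_p$, hence $\varphi(n)\le 2$, i.e.\ $n\in\{1,2,3,4,6\}$; combined with Amitsur's list this leaves only $\Z/n$ ($n\in\{1,2,3,4,6\}$), the quaternion group $Q_8$, the dicyclic group $Q_{12}$ of order $12$, and the binary tetrahedral group $2T$ of order $24$. The key point is to decide which of $Q_8$, $Q_{12}$, $2T$ embed into $B_p^{\times}$; this is a local computation with Hilbert symbols (equivalently, a mass-formula count of the unit groups of maximal orders, using that $B_p$ has a single genus of maximal orders), and it gives $Q_8,2T\hookrightarrow B_p^{\times}$ precisely when $p=2$ and $Q_{12}\hookrightarrow B_p^{\times}$ precisely when $p=3$; so for $p\ge 5$ every finite subgroup of $B_p^{\times}$ is cyclic. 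Finally one computes the minimal index of an abelian normal subgroup of each relevant group: it is $2$ for $Q_8$ (the subgroup $\Z/4$) and for $Q_{12}$ (the subgroup $\Z/6$), and $12$ for $2T$, whose only nontrivial proper normal subgroups are the center $\{\pm1\}$ and the copy of $Q_8$, so its largest abelian normal subgroup is the center. Thus $J_{B_2^{\times}}=12$, $J_{B_3^{\times}}=2$, and $J_{B_p^{\times}}=1$ for $p\ge 5$; running through the (at most three) possibilities for the unordered pair $\{D_1,D_2\}$ and using $J_G=J_{D_1^{\times}}J_{D_2^{\times}}$ produces the three displayed alternatives, and a Honda--Tate argument shows each of them is realized by some pair of non-isogenous elliptic curves.

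I expect the main obstacle to be exactly this embedding question: pinning down, for each prime $p$, which of $Q_8$, $Q_{12}$, $2T$ occur inside $B_p^{\times}$. That is genuine arithmetic --- one compares Hasse invariants of the cyclic algebras $\bigl(\tfrac{-1,c}{\Q}\bigr)$ and $\bigl(\tfrac{-3,c}{\Q}\bigr)$ attached to the order-$4$ elements that a dicyclic or binary tetrahedral subgroup would require, or else invokes the Eichler mass formula together with class number one for discriminants $2$ and $3$ --- whereas the splitting $D=D_1\times D_2$, the direct-product lemma, and the group theory of the short list are all routine.
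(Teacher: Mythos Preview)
Your proposal is correct and follows essentially the same route as the paper: decompose $D=D_1\times D_2$, list the possible $D_i$, and combine the projection argument (the paper's Lemma~\ref{direct general lem}) with the known finite subgroups of $D_{p,\infty}^{\times}$ (the paper's citations of \cite[Lemmas~3.7 and 3.8]{Hwa2022}). The only cosmetic difference is that you package the product formula $J_{G_1\times G_2}=J_{G_1}J_{G_2}$ as a general lemma, whereas the paper records only the abelian-factor case (Lemma~\ref{direct prod lem ab}) and then handles the quaternion-by-quaternion case by directly computing $J_{\mathrm{Dic}_{12}\times\mathrm{Dic}_{12}}=4$ and $J_{\mathfrak{T}^{*}\times\mathfrak{T}^{*}}=144$ via that same projection argument.
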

The proof of Theorem \ref{main theorem1} combines the theory of finite subgroups of quaternion algebras over $\Q$ with some property of the Jordan constant under a direct product of two groups. For more details, see Theorem \ref{prodnonisoellip} below.

On the other hand, if an abelian surface $X$ is a power of an elliptic curve $E$, then it is well known that the endomorphism algebra of $X$ is a matrix algebra over the endomorphism algebra of $E$, and hence, by considering all possible endomorphism algebras of $E,$ we obtain another main result of this paper as follows.

\begin{Thm}\label{main theorem2}
Let $E$ be an elliptic curve over a finite field $k=\mathbb{F}_q$ where $q=p^a$ for a prime $p$, and let $X=E^2$ with $D:=\textrm{End}_k^0(X).$ Then $G:=D^{\times}$ is a Jordan group and we have
\begin{equation*}
	J_{G} \in \begin{cases} \{ 2, 12, 24\} & \mbox{if $E$ is ordinary}; \\ \{12, 24, 60, 120, 360, 960 \}  & \mbox{if $E$ is supersingular},\end{cases}
	\end{equation*}
(where in the second case, all of the endomorphisms of $E$ are defined over $k$).
\end{Thm}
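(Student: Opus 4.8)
The plan is to reduce the computation of $J_G$ for $G = D^\times$ with $D = \operatorname{End}_k^0(E^2) = M_2(D_0)$, where $D_0 = \operatorname{End}_k^0(E)$, to a case analysis over the possible endomorphism algebras $D_0$. By Tate's theorem and the classification of endomorphism algebras of elliptic curves over finite fields, if $E$ is ordinary then $D_0$ is an imaginary quadratic field $K$, so $D = M_2(K)$ and $G = \mathrm{GL}_2(K)$; if $E$ is supersingular with all endomorphisms defined over $k$, then $D_0$ is a totally definite quaternion algebra over $\Q$ (ramified exactly at $p$ and $\infty$), so $D = M_2(D_0)$ is a central simple algebra of degree $4$ over $\Q$, in fact itself a quaternion-type object — more precisely $M_2(D_0)$ is a form of $M_4(\Q)$. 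In each case the finite subgroups of $G$ are, up to conjugacy, finite subgroups of $\mathrm{GL}_4(\Q)$ (since $\dim_\Q D = 4$ in the ordinary case as a $K$-algebra it is $\mathrm{GL}_2$ over a quadratic field, giving a faithful $4$-dimensional $\Q$-representation; in the supersingular case $\dim_\Q D = 16$ and $G$ embeds in $\mathrm{GL}_4(\overline{\Q})$ via the reduced representation). So the heart of the argument is: enumerate the finite subgroups of these specific matrix groups, and for each determine the minimal index of a normal abelian subgroup, then take the supremum.

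The key steps, in order, would be: \textbf{(1)} Invoke the earlier structural results — presumably a proposition analogous to Theorem \ref{prodnonisoellip} handling powers of quaternion algebras and of imaginary quadratic fields — to identify $G$ explicitly as $\mathrm{GL}_2(K)$ or $\mathrm{GL}_2(D_0)$ and to reduce $J_G$ to a finite-group computation. \textbf{(2)} In the ordinary case, use the classification of finite subgroups of $\mathrm{GL}_2$ over an imaginary quadratic field (these are the classically known finite subgroups of $\mathrm{GL}_2(\C)$ that are realizable over $K$, essentially cyclic, binary dihedral, and the binary polyhedral groups, constrained by which roots of unity and which quaternionic groups embed), compute the minimal normal-abelian index for each candidate (e.g. $2$ for binary dihedral, $12$ for $\widetilde{A_4} = \mathrm{SL}_2(\F_3)$, $24$ for $\widetilde{S_4}$), and observe the answer depends on which finite groups are actually realizable inside $M_2(K)$ for the relevant $K$ — which depends on $p$ through the restriction on $\operatorname{disc}(K)$ coming from the Honda–Tate / Deuring conditions. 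This is what produces the trichotomy $\{2,12,24\}$. \textbf{(3)} In the supersingular case, the relevant finite groups are the finite subgroups of $\mathrm{GL}_2(D_0)$ for $D_0$ a totally definite quaternion $\Q$-algebra; these include the larger exceptional groups (binary icosahedral $\widetilde{A_5} = \mathrm{SL}_2(\F_5)$ with Jordan index $60$, and products/extensions thereof, up to groups with Jordan constant $960$). Here one uses the classification of finite subgroups of $\mathrm{GL}_n(D)$ for quaternion division algebras — e.g. via Amitsur's classification of finite groups with a faithful irreducible representation over a division algebra, or Banieqbal's / Vigneras' tables — to list the possibilities, then compute Jordan indices. \textbf{(4)} Finally, for each $p$ (the ramification determines $D_0$ up to isomorphism), determine exactly which groups from the list embed and read off $J_G$ from the maximum; assemble the two displayed sets.

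The main obstacle I expect is \textbf{step (3)}: pinning down precisely which finite groups embed into $\mathrm{GL}_2(D_0)$ for a totally definite quaternion algebra, and computing their Jordan constants correctly. The subtlety is that $M_2(D_0)$ has a rich supply of finite subgroups — not just the ones coming from $\mathrm{SL}_2$ of small fields, but also imprimitive groups built as wreath-type or central products (e.g. $\widetilde{A_5} \times C_m$, or the normalizer of a torus), and central extensions that can push the minimal normal-abelian index up to $960$. One must verify that a group $H$ with claimed Jordan index $J$ genuinely has \emph{no} normal abelian subgroup of smaller index (a lower-bound argument, typically via the representation theory: the index is bounded below by the dimension of a faithful irreducible constituent times something, or one exhibits $H/A$ for every abelian normal $A$), and that $H$ actually occurs — i.e. $\Q(H)$, the subalgebra it generates, is a quotient of $M_2(D_0)$, equivalently that the Schur indices and local invariants match the ramification set $\{p,\infty\}$. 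Getting these realizability conditions exactly right for each prime $p$ — and making sure the list is exhaustive, not merely a collection of examples — is where the real work lies; the rest is bookkeeping using the direct-product behavior of the Jordan constant established earlier in the paper.
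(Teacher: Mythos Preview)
Your outline is correct and structurally parallel to the paper's argument: both reduce to enumerating the maximal finite subgroups of $G=\mathrm{GL}_2(D_0)$ and computing each one's Jordan constant, then taking the supremum. The substantive difference lies in how the enumeration is obtained. You propose to derive it from first principles via Amitsur/Banieqbal/Vigneras-type classifications of finite subgroups of $\mathrm{GL}_2$ over a division algebra; the paper instead imports the finished list wholesale from prior work of the first author (Theorems~\ref{ordinary case} and~\ref{supersing case}, quoting \cite{Hwa2021}), where the maximal finite subgroups of $D^\times$ are already tabulated in the guise of maximal automorphism groups of polarized abelian surfaces in the given isogeny class. With that list in hand, the paper simply computes $J_H$ for each listed $H$ via a sequence of short ad~hoc lemmas (several of which just cite the GroupNames database for the normal-subgroup lattice) and reads off the answer by Lemma~\ref{sup lem}.

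What each route buys: yours would be self-contained and would make the dependence on $p$ in step~(4) explicit, but your concern about step~(3) is exactly the crux --- the supersingular classification is genuinely intricate, and the paper sidesteps it entirely by citation. In particular, the group responsible for the value $960$ is $2_{-}^{1+4}.\mathrm{Alt}_5$ (an extraspecial $2$-group of order $32$ extended by $\mathrm{Alt}_5$), which does not sit in the binary-polyhedral or $\mathrm{SL}_2(\mathbb{F}_q)$ families you list; the paper isolates it in Lemma~\ref{extraspecial lem} and argues directly that its only nontrivial abelian normal subgroup is the central $C_2$. If you carry out your programme, you must ensure your classification captures this extraspecial-type group (and the wreath-type groups $(\mathrm{SL}_2(\mathbb{F}_3))^2\rtimes S_2$, $\mathrm{Dic}_{12}^2\rtimes S_2$, etc.) rather than only the primitive quasisimple candidates --- that is precisely the content the paper outsources to \cite{Hwa2021}.
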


The proof of Theorem \ref{main theorem2} requires concrete knowledge of finite subgroups of $GL_2(D)$, where $D$ is either an imaginary quadratic field or a totally definite quaternion algebra over $\Q.$ In particular, for the latter case, we need to make use of quaternionic matrix representation theory.

For more details, see Theorems \ref{powordelli} and \ref{pow of supell} below.

\begin{Rk}
In fact, in light of the proofs of \cite[Theorems 6.6, 6.8, and 6.9]{Hwa2021}, we can see that the converses of Theorems \ref{main theorem1} and \ref{main theorem2} also hold in the sense that all the integers in the lists are realizable as the Jordan constants of the multiplicative subgroups of the endomorphism algebras of some abelian surfaces over finite fields.
\end{Rk}

This paper is organized as follows: In Section~\ref{prelim}, we recall some of the facts in the theory of Jordan groups together with the classification results for the automorphism groups of abelian surfaces over finite fields (for reader's convenience). In Section~\ref{main}, we obtain the desired results (Theorems \ref{main theorem1} and \ref{main theorem2} above) using the facts that are introduced in Section \ref{prelim}. \\

\indent In the sequel, let $g \geq 1$ be an integer and let $q=p^a$ for some prime number $p$ and an integer $a \geq 1,$ unless otherwise stated. Also, for an integer $n \geq 1,$ we denote by $C_n$ (resp.\ $D_n$, resp.\ $S_n$, resp.\ $\textrm{Dic}_{4n}$) the cyclic group of order $n$ (resp.\ the dihedral group of order $2n$, resp.\ the symmetric group on $n$ letters, resp.\ the dicyclic group of order $4n$). Finally, $\mathfrak{T}^*$ (resp.\ $\mathfrak{O}^*$, resp. $\mathfrak{I}^*$) denotes the binary tetrahedral group (resp.\ binary octahedral group, resp.\ binary icosahedral group).

\section{Preliminaries}\label{prelim}

\subsection{Jordan constants}
In this subsection, we introduce briefly some basic facts from the theory of the Jordan constants of groups. \\

We begin with the following easy but very useful fact.
\begin{Lem}\cite[Lemma 2.2]{Hwa2022}\label{sup lem}
Let $G$ be a Jordan group. Then every subgroup $H$ of $G$ is a Jordan group and we have
\begin{equation*}
J_G = \sup_{H \leq G} J_H
\end{equation*}
where the supremum is taken over all finite subgroups $H$ of $G.$
\end{Lem}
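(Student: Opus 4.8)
The plan is to prove the two assertions by unwinding the definition of the Jordan constant, and the whole argument is formal. First I would establish that the Jordan property passes to subgroups: given any subgroup $H \leq G$, every finite subgroup $K \leq H$ is in particular a finite subgroup of $G$, so by hypothesis $K$ contains a normal abelian subgroup of index at most $J_G$. Hence $H$ is a Jordan group, and by the minimality in the definition of $J_H$ we get $J_H \leq J_G$. Taking the supremum over all finite subgroups $H$ of $G$ yields $\sup_{H \leq G} J_H \leq J_G$; in particular this supremum is a supremum of positive integers bounded above by the positive integer $J_G$, so it is attained and equals some positive integer, say $d$.

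For the reverse inequality I would verify that $d$ itself witnesses the Jordan property of $G$. Let $K$ be an arbitrary finite subgroup of $G$. Since $K$ is a finite subgroup of $K$ itself, the defining property of $J_K$ furnishes a normal abelian subgroup of $K$ of index at most $J_K \leq d$. As $K$ was arbitrary, $d$ is an admissible bound in the definition of the Jordan constant of $G$, whence $J_G \leq d$ by minimality of $J_G$. Combining the two inequalities gives $J_G = d = \sup_{H \leq G} J_H$, as claimed.

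There is no real obstacle here; the only point that deserves a moment's care is to check that the right-hand supremum is finite (indeed attained) before one is allowed to use it as a candidate value for $J_G$ in the minimality argument. This finiteness is supplied precisely by the subgroup-heredity step, which is why that step must come first. I expect the proof in the paper to follow essentially this two-inequality pattern.
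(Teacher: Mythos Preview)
Your argument is correct and is the standard two-inequality verification one would expect. Note, however, that the paper does not supply its own proof of this lemma: it is simply quoted from \cite[Lemma~2.2]{Hwa2022} and stated without proof, so there is no in-paper argument to compare against. Your self-contained proof is a welcome addition and follows exactly the route one would anticipate the cited reference takes.
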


Later in the paper, we need to know the values of the Jordan constants of certain direct products of various groups. The next lemma deals with one easy case of that aspect.
\begin{Lem} \label{direct prod lem ab}
Let $A$ be an abelian group and let $H$ be a Jordan group. Then we have $J_{A \times H} = J_H.$
\end{Lem}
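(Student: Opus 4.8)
The plan is to show both inequalities $J_{A \times H} \le J_H$ and $J_H \le J_{A \times H}$, the second of which is immediate. For the easy direction, note that $H$ embeds into $A \times H$ as $\{1\} \times H$, so by Lemma \ref{sup lem} we get $J_H \le J_{A \times H}$. The substance of the lemma is the reverse inequality.

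For $J_{A \times H} \le J_H$, let $F$ be an arbitrary finite subgroup of $A \times H$; I must produce a normal abelian subgroup of $F$ of index at most $J_H$. Let $\pi_A \colon A \times H \to A$ and $\pi_H \colon A \times H \to H$ be the two projections. Then $\pi_H(F)$ is a finite subgroup of $H$, so it contains a normal abelian subgroup $N \trianglelefteq \pi_H(F)$ with $[\pi_H(F) : N] \le J_H$. Set $F_0 := F \cap \pi_H^{-1}(N)$; this is a normal subgroup of $F$ (preimage of a normal subgroup under the restriction $\pi_H|_F$), and since $\pi_H|_F$ maps $F$ onto $\pi_H(F)$ and $F_0$ onto $N$, we have $[F : F_0] = [\pi_H(F) : N] \le J_H$. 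It remains to check that $F_0$ is abelian. But $F_0 \le A \times N$, and $A \times N$ is abelian because $A$ is abelian and $N$ is abelian; hence $F_0$ is abelian. Thus every finite subgroup $F$ of $A \times H$ has a normal abelian subgroup of index $\le J_H$, which shows $A \times H$ is a Jordan group with $J_{A \times H} \le J_H$.

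I do not anticipate a genuine obstacle here; the only point requiring a little care is the bookkeeping of indices, namely that $[F : F \cap \pi_H^{-1}(N)] = [\pi_H(F) : N]$, which follows from the second isomorphism theorem applied to the surjection $\pi_H|_F \colon F \twoheadrightarrow \pi_H(F)$ and the subgroup $N \le \pi_H(F)$. Combining the two inequalities yields $J_{A \times H} = J_H$.
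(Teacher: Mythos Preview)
Your argument is correct. The paper, however, takes a different and much shorter route: it simply invokes \cite[Theorem~3]{Pop2014}, a general result asserting (in particular) that a product of Jordan groups is Jordan with $J_{G_1 \times G_2} \le J_{G_1}\,J_{G_2}$, and combines this with the trivial observation $J_A = 1$. Your proof is more elementary and entirely self-contained: you pull back a normal abelian subgroup of $\pi_H(F)$ along the projection $\pi_H|_F$ and observe that the preimage sits inside the abelian group $A \times N$. This has the advantage of not relying on an external black box, and it in fact re-proves the relevant special case of Popov's theorem. One minor expository point: your appeal to Lemma~\ref{sup lem} for the inequality $J_H \le J_{A \times H}$ tacitly assumes that $A \times H$ is already known to be Jordan, which you only establish in the second half; logically the two halves should be swapped, though this does not affect correctness.
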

\begin{proof}
This easily follows from \cite[Theorem 3]{Pop2014} and the fact that $J_A=1.$
\end{proof}

\begin{Rk}
If $G$ is a finite group, then $G$ is abelian if and only if $J_G =1.$ The same statement does not seem to hold if $G$ is an infinite group (for example, see \cite[$\S4$]{Mos2011}).
\end{Rk}
The following lemma on the normal abelian subgroups of a direct product of two (more general) groups will be used several times later in Section \ref{main}.
\begin{Lem}\label{direct general lem} 
Let $G_1$ and $G_2$ be finite groups, and let $G=G_1 \times G_2$. If $H$ is an abelian normal subgroup of $G,$ then $H$ is a subgroup of the group $\pi_1 (H) \times \pi_2 (H),$ where $\pi_i \colon G \rightarrow G_i$ is the natural projection map for each $i=1,2.$ In other words, $H$ is a subgroup of a direct product of normal abelian subgroups of $G_1$ and $G_2.$
\end{Lem}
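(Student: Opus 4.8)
The plan is to observe that the asserted containment is essentially formal and uses almost none of the hypotheses, and then to pin down why the two hypotheses that \emph{do} appear ($H$ abelian, $H$ normal) make the concluding phrase of the statement literally correct. First I would note that for an arbitrary subgroup $H \le G = G_1 \times G_2$, every $h \in H$ can be written as $h = (\pi_1(h), \pi_2(h))$ with $\pi_1(h) \in \pi_1(H)$ and $\pi_2(h) \in \pi_2(H)$; hence $H \subseteq \pi_1(H) \times \pi_2(H)$ as subsets of $G$, and therefore as subgroups. This step uses neither that $H$ is abelian nor that it is normal.

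Next I would check that $\pi_1(H)$ and $\pi_2(H)$ are normal abelian subgroups of $G_1$ and $G_2$, respectively. Abelianness is immediate, since each $\pi_i$ restricts to a surjective homomorphism $H \twoheadrightarrow \pi_i(H)$, and a homomorphic image of an abelian group is abelian. For normality I would use that $G_1$ embeds in $G$ as the subgroup $G_1 \times \{e_{G_2}\}$ (and similarly $G_2$ as $\{e_{G_1}\} \times G_2$): given $g_1 \in G_1$, the element $(g_1, e_{G_2})$ normalizes $H$ because $H \trianglelefteq G$, and applying the homomorphism $\pi_1$, together with $\pi_1(g_1, e_{G_2}) = g_1$, yields $g_1\, \pi_1(H)\, g_1^{-1} = \pi_1(H)$; the symmetric argument with $\pi_2$ and $\{e_{G_1}\} \times G_2$ gives $\pi_2(H) \trianglelefteq G_2$. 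Combining this with the first paragraph shows that $H$ sits inside $\pi_1(H) \times \pi_2(H)$, a direct product of a normal abelian subgroup of $G_1$ and a normal abelian subgroup of $G_2$, as claimed.

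Since each of these is a one-line verification, I do not anticipate any genuine obstacle. The only point worth stating explicitly is that projections of a normal subgroup are again normal; this is precisely where the hypothesis $H \trianglelefteq G$ (rather than merely $H \le G$) enters, and it is needed only so that the final sentence of the lemma --- describing $\pi_1(H)$ and $\pi_2(H)$ as \emph{normal} abelian subgroups --- holds on the nose.
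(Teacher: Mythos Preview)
Your proposal is correct and mirrors the paper's own proof almost line for line: both observe $h=(\pi_1(h),\pi_2(h))\in\pi_1(H)\times\pi_2(H)$ for the containment, and both verify that each $\pi_i(H)$ is abelian (as a quotient of $H$) and normal in $G_i$. The only cosmetic difference is that the paper invokes surjectivity of $\pi_i$ directly for normality, whereas you spell this out via conjugation by $(g_1,e_{G_2})$; the content is identical.
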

\begin{proof}
Note that $\pi_i (H)$ is a normal subgroup of $G_i =\pi_i (G)$ for each $i=1, 2$ since $\pi_i$ are surjective, and $\pi_i (H)$ are abelian since $H$ is abelian. Since $H \leq G_1 \times G_2$, every element $h \in H$ can be written as $h=(g_1, g_2)$ for some $g_i \in G_i,$ and hence, we have $h=(\pi_1 (h), \pi_2 (h)) \in \pi_1 (H) \times \pi_2 (H).$
\end{proof}

Our last preliminary fact in this subsection is on the value of Jordan constant of some simple semidirect product of two cyclic groups.
\begin{Lem}\label{semiprod lem}
For any integer $m \geq 3$, we have $J_{C_m \rtimes C_2} =2.$
\end{Lem}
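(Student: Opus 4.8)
The plan is to analyze the structure of a finite subgroup $H$ of $G := C_m \rtimes C_2$ and show that $H$ always contains a normal abelian subgroup of index at most $2$, while also exhibiting a subgroup for which the index $2$ is actually attained, so that $J_G = 2$ by Lemma \ref{sup lem}. Here I should first be a bit careful about what $C_m \rtimes C_2$ means: the only nontrivial action of $C_2$ on $C_m$ (for $m \geq 3$) is inversion, so up to isomorphism $G$ is the dihedral group $D_m$ of order $2m$. (If the intended $\rtimes$ is the trivial action, then $G$ is abelian and $J_G = 1 \neq 2$, so the inversion action is the one meant; I would state this explicitly.)

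First I would recall the standard presentation $G = \langle r, s \mid r^m = s^2 = 1,\ srs^{-1} = r^{-1}\rangle$ and note that the cyclic subgroup $\langle r \rangle \cong C_m$ is normal, abelian, and of index $2$ in $G$. Hence $G$ itself is already a witness that $J_G \leq $ (the Jordan constant), but to pin down the exact value I must check that no smaller bound works and that $2$ suffices for all finite subgroups. For the upper bound $J_G \le 2$: every subgroup $H \le G$ is either contained in $\langle r\rangle$ — in which case $H$ is abelian and needs index $1$ — or it is not, in which case $H \cap \langle r \rangle$ has index exactly $2$ in $H$ (since $H/(H\cap\langle r\rangle)$ embeds into $G/\langle r\rangle \cong C_2$ and is nontrivial), is cyclic, hence abelian, and is normal in $H$ (being the kernel of $H \twoheadrightarrow C_2$). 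So every finite subgroup of $G$ has a normal abelian subgroup of index $\le 2$, giving $J_G \le 2$ via Lemma \ref{sup lem}.

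For the lower bound $J_G \ge 2$, I would exhibit a single finite subgroup $H$ with $J_H \ge 2$; the cleanest choice is $H = G$ itself when $m$ is, say, odd, or more robustly a dihedral subgroup $D_n \le G$ for a suitable odd $n \ge 3$ dividing $m$ — in fact one can just argue directly on $G$. The point is that $G = D_m$ is nonabelian for $m \ge 3$, so by the Remark preceding Lemma \ref{direct general lem} (a finite group has Jordan constant $1$ iff it is abelian) we already get $J_G \ge 2$ once we know $G$ is a finite nonabelian group; combined with $J_G \le 2$ this forces $J_G = 2$. I would double-check the non-abelianness: $rs \ne sr$ since $srs^{-1} = r^{-1} \ne r$ as $m \ge 3$.

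The main obstacle is essentially bookkeeping rather than depth: one must be sure the subgroup analysis covers the case $m$ even, where $\langle r\rangle$ has more than one index-$2$ subgroup and $G$ has several conjugacy classes of reflections, and confirm that none of this creates a finite subgroup needing index $> 2$ (it cannot, by the index-$2$ argument above, which is uniform in $m$). A secondary subtlety is the convention issue flagged at the start — making explicit that the semidirect product is taken with respect to the inversion action, since otherwise the statement is false. With those two points addressed, the proof is short.
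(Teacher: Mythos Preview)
Your proof is correct and follows essentially the same approach as the paper: the key observation in both is that $C_m$ is a normal abelian subgroup of $G=C_m\rtimes C_2$ of index $2$, which gives $J_G\le 2$, while nonabelianness of $G$ (for $m\ge 3$) gives $J_G\ge 2$. The paper compresses this into a single sentence, leaving implicit the subgroup-by-subgroup check and the lower bound that you spell out; your added care about the action being nontrivial is a reasonable clarification but not something the paper addresses.
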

\begin{proof}
This follows from the observation that $C_m$ is an abelian normal subgroup of $G:=C_m \rtimes C_2$ with $[G:C_m]=2.$
\end{proof}

\subsection{Automorphism groups of polarized abelian surfaces}\label{aut gp}

For convenience, we summarize some known results on the classification of maximal finite groups that can be realized as the automorphism groups of some (non-simple) polarized abelian surfaces over finite fields. Throughout this subsection, let $G$ be a finite group. \\

First, for the case of a product of two non-isogenous elliptic curves, we note:
\begin{Thm}\cite[Theorem 6.6]{Hwa2021}\label{noniso case}
There exist a finite field $k$ and two non-isogenous elliptic curves $E_1$ and $E_2$ over $k$ such that $G$ is the automorphism group of a polarized abelian surface over $k$, which is maximal in the isogeny class of $X := E_1 \times E_2$ if and only if $G$ is one of the $14$ groups in the following list (up to isomorphism): \\
\normalfont{(1)-(3)} $C_2 \times C_r$ for $r \in \{2, 4, 6 \}$; \\
(4) $C_2 \times \textrm{Dic}_{12}$; \\
(5) $C_2 \times \mathfrak{T}^*$; \\
(6)-(7) $C_4 \times C_r$ for $r \in \{4, 6 \}$; \\
(8) $C_4 \times \textrm{Dic}_{12}$; \\
(9) $C_4 \times \mathfrak{T}^*$; \\
(10) $C_6 \times C_6$; \\
(11) $C_6 \times \textrm{Dic}_{12}$; \\
(12) $C_6 \times \mathfrak{T}^*$; \\
(13) $\textrm{Dic}_{12} \times \textrm{Dic}_{12}$; \\
(14) $\mathfrak{T}^* \times \mathfrak{T}^*.$
\end{Thm}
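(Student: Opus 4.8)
The plan is to reduce the statement to a classification of the automorphism groups of elliptic curves over finite fields, followed by a compatibility analysis of the residue characteristic. The starting point is the structural reduction: since $E_1$ and $E_2$ are non-isogenous, $\textrm{Hom}_k(E_1,E_2)=\textrm{Hom}_k(E_2,E_1)=0$, so any abelian surface $Y$ in the isogeny class of $X=E_1\times E_2$ satisfies $\textrm{End}_k^0(Y)\cong D_1\times D_2$ with $D_i=\textrm{End}_k^0(E_i)$. Hence $\textrm{Aut}_k(Y)$, being a finite subgroup of $(D_1\times D_2)^{\times}=D_1^{\times}\times D_2^{\times}$, embeds into $\Gamma_1\times\Gamma_2$ where $\Gamma_i$ is a maximal finite subgroup of $D_i^{\times}$; and every such $\Gamma_i$ is the unit group of a maximal order $\mathcal{O}_i$ in $D_i$. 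Conversely, on the product $E_1^{\flat}\times E_2^{\flat}$ of curves with $\textrm{End}_k(E_i^{\flat})=\mathcal{O}_i$, equipped with the product of the canonical principal polarizations, every automorphism is diagonal and preserves the polarization (because $\hat f\circ f=[\deg f]=\textrm{id}$ for an automorphism of an elliptic curve), so its automorphism group is exactly $\mathcal{O}_1^{\times}\times\mathcal{O}_2^{\times}$. Therefore the groups $G$ to be enumerated are precisely the products $\mathcal{O}_1^{\times}\times\mathcal{O}_2^{\times}$ of unit groups of maximal orders in endomorphism algebras of elliptic curves over finite fields.

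Next I would list those unit groups. If $E$ is ordinary, or supersingular but with not all geometric endomorphisms defined over $k$, then $D=\textrm{End}_k^0(E)$ is an imaginary quadratic field $K$ and $\mathcal{O}^{\times}$ is the group of roots of unity in $\mathcal{O}_K$, hence $C_2$, or $C_4$ exactly when $K=\Q(\sqrt{-1})$, or $C_6$ exactly when $K=\Q(\sqrt{-3})$. If $E$ is supersingular with all geometric endomorphisms defined over $k$, then $D=B_{p,\infty}$, the quaternion algebra over $\Q$ ramified exactly at $p$ and $\infty$, and $\mathcal{O}^{\times}$ is the unit group of a maximal order of $B_{p,\infty}$; invoking the classical classification of unit groups of maximal orders in totally definite quaternion $\Q$-algebras, this group is $C_2$, $C_4$, or $C_6$ when $p\geq 5$, it is $\textrm{Dic}_{12}$ when $p=3$, and it is $\mathfrak{T}^{*}$ when $p=2$. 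In all cases $\mathcal{O}^{\times}$ lies in the five-element set $\{C_2,C_4,C_6,\textrm{Dic}_{12},\mathfrak{T}^{*}\}$, and the presence of the factor $\textrm{Dic}_{12}$ (resp.\ $\mathfrak{T}^{*}$) forces the base characteristic to be $3$ (resp.\ $2$).

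Combining the two steps, $G$ is an unordered product of two members of that five-element set, of which there are $\binom{5}{2}+5=15$. The product $\textrm{Dic}_{12}\times\mathfrak{T}^{*}$ cannot occur, since realizing the factor $\textrm{Dic}_{12}$ forces $p=3$ while realizing $\mathfrak{T}^{*}$ forces $p=2$ over the same field $k=\F_q$; deleting it leaves exactly the $14$ groups in the list, which settles the ``only if'' direction. For the ``if'' direction I would exhibit explicit realizations: over $\F_q$ with $p\equiv 1\pmod{12}$ one obtains every product of copies of $C_2,C_4,C_6$ from ordinary elliptic curves with prescribed complex multiplication by $\Z$, $\Z[\sqrt{-1}]$, or $\Z[\zeta_3]$; over $\F_9$ (resp.\ $\F_4$) the products involving $\textrm{Dic}_{12}$ (resp.\ $\mathfrak{T}^{*}$) come from a supersingular curve with Frobenius $\pm p$ (whose endomorphism ring is a maximal order of $B_{p,\infty}$) paired with a suitable non-isogenous curve, and a $C_4$ (resp.\ $C_6$) factor over $\F_4$ or $\F_9$ is supplied by a supersingular curve with Weil number $p\sqrt{-1}$ (resp.\ $p\zeta_6$), which has $\textrm{End}_k^0$ equal to $\Q(\sqrt{-1})$ (resp.\ $\Q(\sqrt{-3})$). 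Existence of curves with the required endomorphism orders in each isogeny class follows from the Honda--Tate theorem together with Waterhouse's description of endomorphism rings over finite fields.

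The main obstacle is the arithmetic input of the second paragraph: one must have the precise list of finite subgroups, and in particular of unit groups of maximal orders, of totally definite quaternion algebras over $\Q$, sharp enough to tie $\textrm{Dic}_{12}$ and $\mathfrak{T}^{*}$ to the characteristics $3$ and $2$ — this is exactly what rules out the fifteenth product. A secondary technical point is the realizability step, where producing an elliptic curve over a small finite field whose endomorphism ring is a prescribed (possibly non-maximal, or merely imaginary-quadratic) order inside $B_{p,\infty}$ requires checking Waterhouse's local conditions at $p$.
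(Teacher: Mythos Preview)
The statement you are proving is not proved in the present paper at all: it is Theorem~2.6, quoted verbatim from \cite[Theorem~6.6]{Hwa2021} as a preliminary classification result, with no argument given here. So there is no in-paper proof to compare your proposal against.

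That said, your sketch is a coherent and essentially correct outline of how such a classification is obtained. The structural reduction (non-isogenous factors force $\textrm{End}^0$ to split as $D_1\times D_2$, hence the maximal automorphism groups are products $\mathcal{O}_1^{\times}\times\mathcal{O}_2^{\times}$), the five-element list $\{C_2,C_4,C_6,\textrm{Dic}_{12},\mathfrak{T}^{*}\}$ of maximal finite unit groups tied to the characteristic, and the exclusion of $\textrm{Dic}_{12}\times\mathfrak{T}^{*}$ by incompatible characteristics are all the right ingredients. Two places deserve a bit more care if you were to write this out fully: first, you should justify that every polarization on $E_1'\times E_2'$ with $E_1',E_2'$ non-isogenous decomposes as a product polarization (so that the automorphism group really is the full product of unit groups and not a proper subgroup cut out by a mixed polarization condition); second, in the realizability step you should verify that the curves you pair together over a common $\F_q$ are genuinely non-isogenous (e.g.\ over $\F_9$ the supersingular curve with Frobenius $\pm 3$ and the one with Weil polynomial $t^2+9$ lie in different isogeny classes). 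Both points are routine but should be stated.
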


If we consider the case of an abelian surface that is a power of an ordinary elliptic curve, then we obtain:
\begin{Thm}\cite[Theorem 6.8]{Hwa2021}\label{ordinary case}
There exists a finite field $k$ and an ordinary elliptic curve $E$ over $k$ such that $G$ is the automorphism group of a polarized abelian surface over $k$, which is maximal in the isogeny class of $X := E^2 $ if and only if $G$ is one of the $9$ groups in the following list (up to isomorphism): \\
\normalfont{(1)-(2)} $D_r$ for $r \in \{4, 6 \}$; \\
(3) $\textrm{Dic}_{12}$; \\
(4) $SL_2(\mathbb{F}_3) \cong \mathfrak{T}^*$; \\
(5) $C_{12} \rtimes C_2 \cong C_4 \times S_3$; \\
(6) $GL_2(\mathbb{F}_3)$; \\
(7) $(C_6 \times C_6) \rtimes C_2 \cong \textrm{Dic}_{12} \rtimes C_6$; \\
(8) $\mathfrak{T}^* \times C_3$; \\
(9) $\mathfrak{T}^* \rtimes C_4$.
\end{Thm}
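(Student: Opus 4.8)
The plan is to reduce the classification to a problem about maximal finite subgroups of $GL_2(K)$ for $K$ imaginary quadratic, and then to solve that problem representation-theoretically. First I would record the structural input: since $E$ is an ordinary elliptic curve over a finite field, its Frobenius endomorphism has characteristic polynomial of negative discriminant, so $K := \textrm{End}_k^0(E)$ is an imaginary quadratic field; hence every abelian surface $Y$ in the isogeny class of $X = E^2$ has $\textrm{End}_k^0(Y) \cong M_2(K)$, and $\textrm{Aut}(Y) = \textrm{End}_k(Y)^{\times}$ is an arithmetic subgroup of $GL_2(K)$. For a polarization $\mu$ on such a $Y$, the Rosati involution on $M_2(K)$ is of the second kind (as $K$ is a CM field) and has the form $g \mapsto H^{-1}\bar g^{\,\mathrm t}H$ for a positive-definite Hermitian matrix $H$ over $K$; thus $\textrm{Aut}(Y,\mu) = \{g \in \textrm{Aut}(Y) : \bar g^{\,\mathrm t}Hg = H\}$, which is finite since it is a discrete subgroup of the compact unitary group $U(2)$.

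Next I would prove the converse by an averaging argument: any finite subgroup $G \subset GL_2(K)$ stabilizes the lattice $\Lambda := \sum_{g \in G} g\Lambda_0$ and the positive-definite Hermitian form $H := \sum_{g \in G}\bar g^{\,\mathrm t}H_0 g$, for any choice of starting data $\Lambda_0$ and $H_0$. Choosing $Y$ in the isogeny class of $X$ whose endomorphism lattice contains $\Lambda$, and scaling $H$ to be integral, one obtains a polarization $\mu$ on $Y$ with $G \subseteq \textrm{Aut}(Y,\mu)$. Together with Deuring's theorem that every imaginary quadratic field occurs as $\textrm{End}_k^0$ of some ordinary elliptic curve over some finite field, this shows that the groups arising as automorphism groups in the isogeny class of $X$, as $E$ and the polarization vary, are exactly the finite subgroups of $GL_2(K)$ for $K$ imaginary quadratic; so the theorem amounts to classifying the maximal such subgroups, over all $K$.

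For that group-theoretic step I would start from the classification of finite subgroups of $GL_2(\mathbb{C})$ by the image in $PGL_2(\mathbb{C})$ (cyclic, dihedral, $A_4$, $S_4$, $A_5$), list the possible central extensions realizing each image, and then impose the two arithmetic constraints forced by an embedding into $GL_2(K)$: the character field of the relevant faithful $2$-dimensional representation must be contained in the imaginary quadratic field $K$ --- this at once eliminates the $A_5$-type groups, whose $2$-dimensional characters generate the real field $\mathbb{Q}(\sqrt 5)$, and the binary octahedral group $\mathfrak{O}^*$, whose character field is the real field $\mathbb{Q}(\sqrt 2)$ --- and the Schur index over $K$ must be $1$. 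Working through this field by field, the ``generic'' imaginary quadratic $K$ should contribute only the dihedral groups $D_4$ and $D_6$ coming from $GL_2(\mathbb{Z})$ (together with $\textrm{Dic}_{12}$ and $\mathfrak{T}^* \cong SL_2(\mathbb{F}_3)$ over those $K$ that split the relevant quaternion algebras), while the special fields $\mathbb{Q}(i)$, $\mathbb{Q}(\sqrt{-3})$, $\mathbb{Q}(\sqrt{-2})$ --- which carry extra roots of unity or split small quaternion algebras --- should contribute the larger groups $C_4 \times S_3$, $GL_2(\mathbb{F}_3)$, $(C_6 \times C_6) \rtimes C_2$, $\mathfrak{T}^* \times C_3$, $\mathfrak{T}^* \rtimes C_4$. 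Discarding the non-maximal groups, and checking that each survivor is realized by an honest ordinary elliptic curve over a suitable $\mathbb{F}_q$ (choosing the CM order, e.g.\ $\mathbb{Z}[i]$ or $\mathbb{Z}[\zeta_3]$, as needed), should leave precisely the nine groups in the statement.

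The hard part will be the last step: listing the candidate groups is routine, but determining, for each imaginary quadratic $K$, exactly which of them embed into $GL_2(K)$ --- equivalently, computing whether the quaternion algebras carrying the Schur indices of their faithful $2$-dimensional representations split over $K$ --- and then disentangling the inclusions among the survivors to isolate the maximal ones, is where essentially all the work lies.
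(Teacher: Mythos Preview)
The paper does not prove this theorem: it is quoted without proof in the preliminaries section as \cite[Theorem 6.8]{Hwa2021}, a classification result imported from an earlier paper, and is used later only as an input to the computation of Jordan constants. So there is no proof in the present paper to compare your proposal against.

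That said, your outline is a reasonable sketch of how such a classification is obtained, and it is essentially the strategy carried out in \cite{Hwa2021}: identify $\textrm{End}_k^0(E^2)$ with $M_2(K)$ for $K$ imaginary quadratic, realize polarized automorphism groups as finite subgroups of $GL_2(K)$ fixing a Hermitian form, invoke Deuring to get all imaginary quadratic $K$, and then filter the finite subgroups of $GL_2(\mathbb{C})$ by the arithmetic constraints (character field contained in $K$, Schur index $1$ over $K$). One point to handle carefully is that your averaging argument only gives $G \subseteq \textrm{Aut}(Y,\mu)$, not equality; the statement asks for groups that occur as the \emph{full} automorphism group and are maximal as such in the isogeny class, so you would still need to argue that each listed $G$ is actually realized as the full automorphism group for some polarization (and not merely as a subgroup of a larger one), and that nothing else survives the maximality filter.
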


Finally, for the most complicated case of an abelian surface being a power of a supersingular elliptic curve, we have:
\begin{Thm}\cite[Theorem 6.9]{Hwa2021}\label{supersing case}
There exists a finite field $k$ and a supersingular elliptic curve $E$ over $k$ (all of whose endomorphisms are defined over $k$) such that $G$ is the automorphism group of a polarized abelian surface over $k$, which is maximal in the isogeny class of $X := E^2 $ if and only if $G$ is one of the $20$ groups in the following list (up to isomorphism): \\
\normalfont{(1)} $2_{-}^{1+4}.\textrm{Alt}_5 $;  \\
(2) $SL_2(\F_3) \times S_3 $;  \\
(3) $(SL_2(\F_3))^2 \rtimes S_2 $;  \\
(4) $SL_2(\F_9)  $;  \\
(5) $C_3 : (SL_2(\F_3).2) $;  \\
(6) $(\textrm{Dic}_{12})^2 \rtimes S_2 $;  \\
(7) $SL_2(\F_5).2  $;  \\
(8) $ SL_2(\F_5):2 $;  \\
(9) $(Q_8 \rtimes C_3) \rtimes C_2 \cong GL_2 (\F_3)$;  \\
(10) $C_{12} \rtimes C_2 \cong C_4 \times S_3 $;  \\
(11) $(Q_8 \rtimes C_3) \cdot C_4 \cong \mathfrak{T}^* \rtimes C_4$;  \\
(12) $ (C_6 \times C_6) \rtimes C_2 \cong \textrm{Dic}_{12} \rtimes C_6$;  \\
(13) $(Q_8 \rtimes C_3) \times C_3 \cong \mathfrak{T}^* \times C_3$;  \\
(14)-(15) $ D_r$ for $r \in \{4, 6\}$;  \\
(16) $\mathfrak{I}^* \cong SL_2 (\F_5)$; \\
(17) $\textrm{Dic}_{24}$;  \\
(18) $\mathfrak{O}^*$;  \\
(19) $ \mathfrak{T}^* \cong SL_2 (\F_3)$;    \\
(20) $\textrm{Dic}_{12}.$
\end{Thm}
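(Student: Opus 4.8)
The plan is to translate the statement into a purely arithmetic classification and then carry out an enumeration. \textbf{Step 1 (set-up).} Since all endomorphisms of $E$ are defined over $k$, the algebra $D:=\textrm{End}_k^0(E)$ is the definite quaternion algebra over $\Q$ ramified exactly at $p$ and $\infty$, and $\textrm{End}_k^0(X)=M_2(D)$. A polarization $\lambda$ on $X$ equips $M_2(D)$ with its Rosati involution, which is a positive involution of the first kind; by Albert's description of such involutions (see, e.g., \cite{Mum2008}) it has the form $M\mapsto\Phi^{-1}M^{*}\Phi$, where $M^{*}$ is the conjugate transpose with respect to the canonical involution of $D$ (itself positive since $D$ is definite) and $\Phi\in GL_2(D)$ is a positive definite $*$-Hermitian matrix. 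Hence $\textrm{Aut}(X,\lambda)=\{M\in\textrm{End}_k(X):M^{*}\Phi M=\Phi\}$ is the isometry group of the positive definite rank-two Hermitian lattice $(\textrm{End}_k(E)^{2},\Phi)$ over a maximal order of $D$, and in particular it is finite, being contained in the compact group $U_2(D\otimes_\Q\R)$.

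\textbf{Step 2 (reduction).} Every abelian surface in the isogeny class of $X$ is again $E'^{2}$ for some supersingular $E'$ and has endomorphism algebra $M_2(D)$; conversely, any finite subgroup of $GL_2(D)$ fixing a positive definite Hermitian form also stabilizes a maximal-order lattice (average the standard one over the group) and hence is contained in $\textrm{Aut}(Y,\mu)$ for a suitable polarized surface $(Y,\mu)$ in the class. So I would reduce the theorem to the following: \emph{as $p$ ranges over all primes, classify the finite subgroups of $GL_2(B_{p,\infty})$ that are maximal among those preserving a positive definite Hermitian form.}

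\textbf{Step 3 (enumeration).} Via the reduced trace form such a group $G$ embeds in $O_8(\R)$, and its action on $D^{2}\cong\Q^{8}$ is either $\Q$-irreducible or not. In the $\Q$-irreducible case I would appeal to the degree-two part of the classification of finite quaternionic matrix groups (Nebe), whose rationally irreducible maximal finite representatives in dimension eight are known, together with the condition that the local Hasse invariants of $\Q[G]$ be compatible with $M_2(B_{p,\infty})$; this forces $p$ into the short list $\{2,3,5\}$ and accounts for the exceptional large groups $2_{-}^{1+4}.\textrm{Alt}_5$, $SL_2(\F_9)$, $SL_2(\F_5).2$, $SL_2(\F_5):2$, $(SL_2(\F_3))^{2}\rtimes S_2$, $SL_2(\F_3)\times S_3$, and $C_3:(SL_2(\F_3).2)$. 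In the $\Q$-reducible case $D^{2}$ splits into $G$-invariant pieces, so $G$ is conjugate into $GL_1(D)\times GL_1(D)$, into $GL_1(D)\wr S_2$ when the two $D$-lines are interchanged, or into $GL_2$ of a quadratic subfield of $D$; using the classical list of finite subgroups of $B_{p,\infty}^{\times}$ (cyclic, dicyclic, and binary polyhedral) together with the fact that each of these embeds into $B_{p,\infty}^{\times}$ only for a prescribed set of $p$ (for instance $Q_8$, and hence $\mathfrak{T}^{*}$, $\mathfrak{O}^{*}$, $\mathfrak{I}^{*}$ and the $2$-power dicyclic groups, only for $p=2$), one then obtains the remaining candidates $D_4$, $D_6$, $\textrm{Dic}_{12}$, $\textrm{Dic}_{24}$, $\mathfrak{T}^{*}$, $\mathfrak{O}^{*}$, $\mathfrak{I}^{*}$, $GL_2(\F_3)$, $(\textrm{Dic}_{12})^{2}\rtimes S_2$, $C_4\times S_3$, $\mathfrak{T}^{*}\rtimes C_4$, $\textrm{Dic}_{12}\rtimes C_6$, and $\mathfrak{T}^{*}\times C_3$. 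Retaining only the groups that genuinely fix a positive definite Hermitian form over the relevant $B_{p,\infty}$ and that are not properly contained in another realizable group leaves exactly the twenty groups in the statement.

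\textbf{Main obstacle.} The hard part is the joint bookkeeping of representation theory and maximality. For each abstract finite group one must decide which faithful symplectic representations of quaternionic degree two it admits over which $B_{p,\infty}$ — a computation with characters, Schur indices, and local invariants at $p$ and $\infty$ — then check that an invariant Hermitian form can be chosen positive definite, and, most delicately, prove maximality by excluding every extension to a larger realizable group, which reduces to normalizer computations inside $U_2(D\otimes_\Q\R)$ subject to the $p$-adic conditions. The supersingular case is the most involved of the three precisely because $B_{p,\infty}$ absorbs $2$-groups when $p=2$ and $3$-groups when $p=3$ far more generously than an imaginary quadratic field can, which is why the list is so much longer than in the non-simple ordinary case of Theorem~\ref{ordinary case}.
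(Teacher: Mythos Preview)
This theorem is not proved in the present paper at all: it is quoted verbatim from \cite[Theorem 6.9]{Hwa2021} as a preliminary result in Section~\ref{aut gp}, with no accompanying argument. So there is no ``paper's own proof'' to compare your proposal against; the authors simply import the classification and then use it as input to compute Jordan constants in Theorem~\ref{pow of supell}.

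That said, your outline is broadly the right shape for how such a classification is obtained in \cite{Hwa2021}: one identifies $\textrm{Aut}(X,\lambda)$ with the unitary group of a positive definite Hermitian lattice over a maximal order in $D_{p,\infty}$, and then leans on Nebe's classification of maximal finite quaternionic matrix groups in degree two together with local splitting conditions at $p$ and $\infty$. A few details in your Step~3 are loose and would not survive as written. The dichotomy ``$\Q$-irreducible vs.\ not'' does not cleanly separate the list the way you suggest (for instance $(\textrm{Dic}_{12})^2\rtimes S_2$ acts $\Q$-irreducibly on $D^2$ for $p=3$, yet you place it in the reducible bucket), and the assertion that $Q_8$ and hence the binary polyhedral groups embed in $B_{p,\infty}^\times$ ``only for $p=2$'' is false: $Q_8$ sits inside $B_{3,\infty}^\times$ as well, and $\mathfrak{I}^*$ embeds in $B_{5,\infty}^\times$. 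The genuine constraint is which \emph{maximal} finite subgroups of $B_{p,\infty}^\times$ occur for each $p$ (cyclic for $p\geq 5$, $\textrm{Dic}_{12}$ for $p=3$, $\mathfrak{T}^*$ for $p=2$), and the maximality check inside $GL_2(D_{p,\infty})$ is exactly the delicate bookkeeping you flag at the end. If you want to reconstruct the proof, the place to look is \cite{Hwa2021} and Nebe's tables, not the present paper.
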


\section{Main Result}\label{main}
In this section, we provide a list of possible values of Jordan constants of the multiplicative subgroups of endomorphism algebras of arbitrary abelian surfaces over finite fields. \\

We begin with recalling the case of simple abelian surfaces.

\begin{Thm}\cite[Theorems 3.18 and 3.19]{Hwa2022} Let $X$ be a simple abelian surface over a finite field $k=\F_q$ with $q=p^a$ for some prime $p$ and an integer $a \geq 1$, and let $D=\textrm{End}_k^0(X)$. Then $J_{D^{\times}}$ is contained in the set $\{1, 2, 12, 24, 60 \}.$ Conversely, if $n \in \{1, 2, 12, 24, 60 \},$ then there is a simple abelian surface $X$ for some finite field $k=\F_q$ ($q=p^a$) such that $J_{D^{\times}}=n$, where $D=\textrm{End}_k^0 (X).$
\end{Thm}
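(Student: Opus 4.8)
The plan is to reduce the computation of $J_{D^\times}$ for a simple abelian surface $X$ to the enumeration of finite subgroups of the relevant division algebras, and then invoke the classification of endomorphism algebras of simple abelian surfaces over finite fields (Honda--Tate theory, as in Waterhouse). First I would recall that, by Lemma~\ref{sup lem}, it suffices to understand the finite subgroups of $D^\times$, where $D = \textrm{End}_k^0(X)$ is one of: (i) an imaginary quadratic field; (ii) a quartic CM field; (iii) a (totally definite or indefinite) quaternion algebra over $\Q$; or (iv) a quaternion division algebra over a real quadratic field. In case (i) and (ii), $D^\times$ is abelian, so $J_{D^\times} = 1$. In the quaternion cases, the finite subgroups of $D^\times$ embed into the finite subgroups of $\mathbb{H}^\times$ (the Hamilton quaternions) when $D$ is totally definite, hence are among the cyclic, dicyclic, and binary polyhedral groups $\mathfrak{T}^*, \mathfrak{O}^*, \mathfrak{I}^*$; their Jordan constants are, respectively, $1$ (cyclic), $2$ (dicyclic, by Lemma~\ref{semiprod lem} or directly), $12$ (for $\mathfrak{T}^* \cong SL_2(\F_3)$), $24$ (for $\mathfrak{O}^*$), and $60$ (for $\mathfrak{I}^* \cong SL_2(\F_5)$). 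Taking the supremum over the finite subgroups that actually occur yields the containment $J_{D^\times} \in \{1, 2, 12, 24, 60\}$.

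For the forward direction, the key step is to show that in each possible case of $D$, the supremum of $J_H$ over finite subgroups $H \le D^\times$ does not exceed the claimed bound, and in particular that the binary icosahedral group (the source of the value $60$) can indeed be realized inside some such $D^\times$ but nothing larger can. The point is that a finite subgroup of $D^\times$ for $D$ a quaternion algebra over $\Q$ or over a real quadratic field is still a finite subgroup of a quaternion \emph{division} algebra over a number field, and a theorem classifying finite subgroups of the multiplicative group of a quaternion algebra (Amitsur's classification, or the Vign\'eras/Chinburg--Friedman type results) restricts these to the binary polyhedral list; over $\Q$ specifically only $\mathfrak{T}^*$ (and its subgroups) arises among the exceptional ones, while $\mathfrak{O}^*$ and $\mathfrak{I}^*$ require the base field to contain $\sqrt{2}$ and $\sqrt{5}$ respectively, hence a real quadratic field, which does occur for suitable supersingular-type $X$ whose endomorphism algebra is a quaternion algebra over $\Q(\sqrt 5)$. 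One must be careful that we are in the \emph{unpolarized} setting: the automorphism group of $X$ itself (the torsion units of a maximal order) need not realize the full Jordan constant, but $J_{D^\times}$ only sees finite subgroups of $D^\times$, and by Lemma~\ref{sup lem} it is enough to exhibit one finite subgroup with the desired Jordan constant in an appropriate $D^\times$.

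The converse realizability is then handled by producing, for each $n \in \{1,2,12,24,60\}$, an explicit simple abelian surface: $n=1$ from an $X$ with $D$ a CM field (e.g.\ a Weil number generating a quartic CM field), $n=2$ from an $X$ whose $D$ is a quaternion algebra containing a dicyclic group, $n=12, 24, 60$ from $X$'s whose endomorphism algebras contain $\mathfrak{T}^*$, $\mathfrak{O}^*$, $\mathfrak{I}^*$ respectively, the existence of such $X$ being guaranteed by Honda--Tate theory applied to the appropriate $p$-Weil numbers (this is essentially the content already assembled in \cite{Hwa2021, Hwa2022}). The main obstacle, and the step requiring the most care, is the precise determination of which quaternion algebras over which number fields actually arise as $\textrm{End}_k^0(X)$ for a simple abelian surface over a finite field, and correspondingly which exceptional binary polyhedral groups embed in them --- this is where one genuinely needs the arithmetic input (Tate's theorem, the structure of the endomorphism algebra via the local invariants of the Frobenius Weil number) rather than pure group theory, and it is the reason the list is exactly $\{1,2,12,24,60\}$ and not, say, something including $120$ or $360$ as in the higher-rank matrix-algebra cases of Theorem~\ref{main theorem2}.
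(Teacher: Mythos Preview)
The paper does not supply a proof of this theorem: it is recorded purely as a citation of \cite[Theorems~3.18 and~3.19]{Hwa2022}, with no argument given. So there is no in-paper proof to compare your proposal against; your sketch is in effect a plausible reconstruction of what the cited reference does, and the overall strategy --- reduce via Lemma~\ref{sup lem} to finite subgroups of $D^\times$, invoke the binary-polyhedral classification of finite subgroups of quaternion division algebras, read off $J_H\in\{1,2,12,24,60\}$, and realize each value via Honda--Tate --- is the expected one.

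There is, however, a classification slip worth flagging. For a \emph{simple} abelian surface over a \emph{finite} field, Tate's theorem gives $2\dim X=[\Q(\pi):\Q]\cdot m$ where $m$ is the Schur index of $D$ over its center $K=\Q(\pi)$; since $K=\Q$ forces $m=2$ and hence $\dim X=1$, your cases (i) ($D$ an imaginary quadratic field) and (iii) ($D$ a quaternion algebra over $\Q$, definite or indefinite) never occur for surfaces --- both are elliptic-curve phenomena. The actual dichotomy is: either $D$ is a quartic CM field ($e=4$, $m=1$), or $D$ is a quaternion division algebra over a \emph{quadratic} field $K=\Q(\pi)$ ($e=m=2$), where $K$ may be real (necessarily $K=\Q(\sqrt p)$) or imaginary. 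Your remark that $\mathfrak{O}^*$ and $\mathfrak{I}^*$ require $\sqrt 2$ and $\sqrt 5$ in the base then correctly singles out $p=2$ and $p=5$ for the values $24$ and $60$, and the converse constructions go through as you indicate. The phantom cases (i) and (iii) are harmless for the final set, since they would only contribute values already present, but a self-contained write-up would need the corrected list of possible $D$.
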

Now, to deal with the case of an abelian surface that is a product of non-isogenous elliptic curves, one of whose endomorphism algebras is commutative, we record the following easy observation.

\begin{Rk}\label{ell rmk}
Let $E_1$ and $E_2$ be two non-isogenous elliptic curves over a finite field $k.$ Let $D_1 = \textrm{End}_k^0(E_1)$ and $D_2 = \textrm{End}_k^0(E_2).$ Then in view of Lemma \ref{direct prod lem ab}, if either $D_1$ or $D_2$ is commutative, then we have $J_{D_1^{\times} \times D_2^{\times}} = J_{D_1^{\times}} \cdot J_{D_2^{\times}}.$
\end{Rk}

For the exact values of Jordan constants of other specific groups in Theorem \ref{noniso case} to work with, we record the following lemmas.

\begin{Lem}\label{dic lem}
For any integer $n \geq 2$, let $G=\textrm{Dic}_{4n} \times \textrm{Dic}_{4n}.$ Then we have $J_{G} = 4.$
\end{Lem}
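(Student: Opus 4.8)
The plan is to apply Lemma \ref{sup lem}, which reduces the computation of $J_G$ to taking the supremum of $J_H$ over all finite subgroups $H$ of $G = \mathrm{Dic}_{4n} \times \mathrm{Dic}_{4n}$; but since $G$ is already finite, we only need to understand the subgroup structure of $G$ directly. The key structural input is Lemma \ref{direct general lem}: any abelian normal subgroup $H$ of $G$ embeds into $\pi_1(H) \times \pi_2(H)$, a product of abelian normal subgroups of the two $\mathrm{Dic}_{4n}$ factors. So the strategy has two halves: (i) exhibit an abelian normal subgroup of $G$ of index $4$ to show $J_G \le 4$, and (ii) show that no abelian normal subgroup of $G$ can have index less than $4$, i.e.\ that $J_G \ge 4$.

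For the upper bound, recall that $\mathrm{Dic}_{4n}$ contains the cyclic subgroup $C_{2n}$ as a normal subgroup of index $2$ (this is the standard presentation $\langle a, b \mid a^{2n}=1,\ b^2 = a^n,\ bab^{-1}=a^{-1}\rangle$, with $\langle a \rangle \cong C_{2n}$ normal of index $2$). Then $C_{2n} \times C_{2n}$ is an abelian normal subgroup of $G$ of index $4$, so $J_G \le 4$. For the lower bound, I would argue that if $H \trianglelefteq G$ is abelian then by Lemma \ref{direct general lem} we have $H \le N_1 \times N_2$ where each $N_i$ is an abelian normal subgroup of $\mathrm{Dic}_{4n}$; a short check of the normal subgroup lattice of $\mathrm{Dic}_{4n}$ shows that the largest abelian normal subgroup is $C_{2n}$ (every subgroup containing an element outside $\langle a\rangle$ is either nonabelian or non-normal, and in particular $\mathrm{Dic}_{4n}$ itself is nonabelian since $n \ge 2$). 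Hence $[\,\mathrm{Dic}_{4n} : N_i\,] \ge 2$ for each $i$, so $[\,G : H\,] \ge [\,G : N_1\times N_2\,] \ge 4$. Combining, $J_G = 4$.

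The main obstacle — really the only nontrivial point — is the claim that $C_{2n}$ is the unique \emph{maximal} abelian normal subgroup of $\mathrm{Dic}_{4n}$, i.e.\ that there is no abelian normal subgroup of index less than $2$ (equivalently, that $\mathrm{Dic}_{4n}$ is nonabelian, which is immediate for $n\ge 2$) and, more to the point for producing a clean argument, that we cannot do better by a subgroup $H$ of $G$ that is \emph{not} of the form $N_1 \times N_2$. Lemma \ref{direct general lem} handles exactly this: any abelian normal $H$ is \emph{contained} in such a product, so its index is at least that of the product, and we never need to enumerate the (many) abelian normal subgroups of $G$ individually. One should also double-check the edge behaviour at $n=2$, where $\mathrm{Dic}_8 \cong Q_8$ and $C_{2n} = C_4$: indeed $Q_8$ has $C_4$ as a normal abelian subgroup of index $2$ and is itself nonabelian, so the argument goes through uniformly for all $n \ge 2$.
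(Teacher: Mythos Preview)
Your proposal is correct and follows essentially the same route as the paper's proof: both hinge on Lemma~\ref{direct general lem} together with the fact that $C_{2n}$ is a maximal abelian normal subgroup of $\mathrm{Dic}_{4n}$. Your write-up simply spells out the upper and lower bounds and the edge case $n=2$ that the paper leaves implicit.
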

\begin{proof}
This follows from Lemma \ref{direct general lem} and the fact that $C_{2n}$ is a maximal normal abelian subgroup of $\textrm{Dic}_{4n}.$
\end{proof}

\begin{Lem}\label{T* lem}
Let $G=\mathfrak{T}^* \times \mathfrak{T}^*.$ Then we have $J_G = 144.$
\end{Lem}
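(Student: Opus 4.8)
**Proof proposal for Lemma (J_{\mathfrak{T}^* \times \mathfrak{T}^*} = 144).**

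The plan is to compute the Jordan constant of $G = \mathfrak{T}^* \times \mathfrak{T}^*$ directly from the definition, using the structure theory of $\mathfrak{T}^*$ and Lemma \ref{direct general lem} to control abelian normal subgroups. First I would recall that $\mathfrak{T}^* = \textrm{SL}_2(\F_3)$ has order $24$, that its center is $C_2$, and that $\mathfrak{T}^*/C_2 \cong \textrm{Alt}_4$. The crucial structural fact is that a maximal normal abelian subgroup of $\mathfrak{T}^*$ is the cyclic group $C_4$ of order $4$ (sitting inside the quaternion Sylow $2$-subgroup $Q_8$), so that $[\mathfrak{T}^* : C_4] = 6$; equivalently, $\mathfrak{T}^*$ itself is not abelian, so $J_{\mathfrak{T}^*} \geq 2$, and in fact one can check (this is standard, and consistent with the finite-subgroups-of-quaternion-algebras theory invoked elsewhere in the paper) that $J_{\mathfrak{T}^*} = 12$, witnessed by the whole group $H = \mathfrak{T}^*$ with smallest normal abelian subgroup of index $12$ — wait, I should be careful: the Jordan constant of $\mathfrak{T}^*$ is the largest index $[H : A]$ over finite subgroups $H \leq \mathfrak{T}^*$ and smallest normal abelian $A \trianglelefteq H$. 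Since $C_4 \trianglelefteq \mathfrak{T}^*$ has index $6$, and no normal abelian subgroup can have smaller index (the only normal subgroups of $\mathfrak{T}^*$ are $1, C_2, Q_8, \mathfrak{T}^*$, and $Q_8$ is nonabelian), we get $J_{\mathfrak{T}^*} = 6$; but subgroups of $\mathfrak{T}^*$ — the nonabelian ones are $Q_8$ (with $J_{Q_8} = 2$), $C_3 \rtimes C_4 = \textrm{Dic}_{12}$ ($J = 2$), so indeed $J_{\mathfrak{T}^*} = 6$. I will want to confirm this value against Theorem in the excerpt listing $\{1,2,12,24,60\}$ for simple abelian surfaces — $6$ is not on that list, which only tells me $\mathfrak{T}^*$ is not realized there, not a contradiction.

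For the upper bound $J_G \leq 144$: given a finite subgroup $H \leq G = \mathfrak{T}^* \times \mathfrak{T}^*$ and a normal abelian subgroup $A \trianglelefteq H$ of smallest index, I would enlarge $A$ to a normal abelian subgroup of $H$ that is as large as possible, then apply Lemma \ref{direct general lem}: $A \leq \pi_1(A) \times \pi_2(A)$ where each $\pi_i(A)$ is a normal abelian subgroup of $\pi_i(H) \leq \mathfrak{T}^*$. The cleanest route is to take $H = G$ itself and exhibit $A = C_4 \times C_4 \trianglelefteq G$ with $[G : A] = 6 \cdot 6 = 36$ — but that only gives $J_G \geq 36$, not $144$. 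So I must reconsider: the value $144 = 12^2$ strongly suggests $J_{\mathfrak{T}^*} = 12$, not $6$. Re-examining: the Jordan constant counts the index of a normal abelian subgroup, and for $\mathfrak{T}^*$ one should check whether there might be a subgroup $H$ with $[H:A] > 6$; since all proper subgroups have order dividing $24$ properly and are among $C_1, C_2, C_3, C_4, C_6, Q_8, \textrm{Dic}_{12}$, all with Jordan constant $\leq 2$, the maximum is attained at $H = \mathfrak{T}^*$. The resolution is that I have the wrong maximal normal abelian subgroup: the center $C_2$ is the unique normal abelian subgroup that is also normal, and $C_4$ is \emph{not} normal in $\mathfrak{T}^*$ (it's only normal in $Q_8$). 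The only normal abelian subgroups of $\mathfrak{T}^*$ are $1$ and $C_2$, giving index $12$ — hence $J_{\mathfrak{T}^*} = 12$.

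With $J_{\mathfrak{T}^*} = 12$ correctly established (normal abelian subgroups of $\mathfrak{T}^*$ are exactly $\{1, C_2\}$, the largest having index $12$), the proof concludes as follows. For the lower bound, take $H = G$: by Lemma \ref{direct general lem} any normal abelian $A \trianglelefteq G$ lies in $\pi_1(A) \times \pi_2(A)$ with $\pi_i(A) \in \{1, C_2\}$, so $[G : A] \geq [G : C_2 \times C_2] = 24 \cdot 24 / 4 = 144$, and $A = C_2 \times C_2$ achieves it; thus $J_G \geq 144$. For the upper bound, let $H \leq G$ be arbitrary finite and $A \trianglelefteq H$ a largest normal abelian subgroup. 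The hard part is ruling out that some diagonal-type subgroup $H$ — e.g. a subgroup of $\mathfrak{T}^* \times \mathfrak{T}^*$ projecting onto a single $\mathfrak{T}^*$, or a fiber product over $\textrm{Alt}_4$ — has $[H:A] > 144$. I would argue: by Lemma \ref{direct general lem}, $A \leq \pi_1(A) \times \pi_2(A) \leq A_1 \times A_2$ where $A_i$ is a maximal normal abelian subgroup of $\pi_i(H)$; since $\pi_i(H) \leq \mathfrak{T}^*$ has Jordan constant $\leq 12$ (being $\mathfrak{T}^*$ or a subgroup of smaller Jordan constant), we would like $[\pi_i(H) : A_i] \leq 12$. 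This is the main obstacle, because $A_i$ must be normal in $\pi_i(H)$, and I need $A = (A_1 \times A_2) \cap H$ (or a suitable normal abelian piece) to be normal in $H$ with $[H : A] \leq [\pi_1(H) : A_1][\pi_2(H):A_2] \leq 144$ — establishing this requires a careful check that $(A_1 \times A_2) \cap H$ is abelian and normal in $H$ (normality is clear; abelianness needs $A_1, A_2$ abelian, which holds) and that $[H : (A_1 \times A_2) \cap H] \leq [\pi_1(H):A_1] \cdot [\pi_2(H):A_2]$, which follows from $H/((A_1 \times A_2) \cap H) \hookrightarrow (\pi_1(H)/A_1) \times (\pi_2(H)/A_2)$. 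Combining, $J_G = 144$.
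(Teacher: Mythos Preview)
Your proposal is correct and, once you settle on the key fact that the only normal abelian subgroups of $\mathfrak{T}^*$ are $1$ and its center $C_2$, it follows exactly the paper's route: the paper's one-line proof invokes Lemma~\ref{direct general lem} together with that fact to conclude $J_G = 144$. Your write-up spells out the upper bound (constructing $(A_1 \times A_2)\cap H$ for an arbitrary subgroup $H$) more carefully than the paper does, and your initial detour through the non-normal $C_4$ is unnecessary but ultimately harmless since you correct it.
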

\begin{proof}
This follows from Lemma \ref{direct general lem} and the fact that $C_{2}$ is a maximal abelian normal subgroup of $\mathfrak{T}^*.$ (In particular, we have $J_{\mathfrak{T}^*} = 12$.)
\end{proof}




Now, we are ready to introduce one of our main results of this section.
\begin{Thm}\label{prodnonisoellip}
Let $E_1$ and $E_2$ be two non-isogenous elliptic curves a finite field $k=\mathbb{F}_q$ where $q=p^a$ for a prime $p$ with $D_1 = \textrm{End}_k^0(E_1)$ and $D_2 = \textrm{End}_k^0 (E_2)$, and let $X=E_1 \times E_2$ with $D:=\textrm{End}_k^0(X)$. Then $G:=D^{\times}$ is a Jordan group and we have
\begin{equation*}
	J_{G} \in \begin{cases} \{1, 12, 144 \} & \mbox{if $p=2$}; \\ \{1, 2, 4\}  & \mbox{if $p=3$}; \\ \{1\}  & \mbox{if $p \geq 5$} .\end{cases}
	\end{equation*}
\end{Thm}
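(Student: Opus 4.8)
The plan is to reduce the computation of $J_G$ for $G = D^\times$ to a finite group computation via Lemma \ref{sup lem}, using the fact that $D = D_1 \times D_2$ as $\Q$-algebras (since $E_1$ and $E_2$ are non-isogenous), so $G = D_1^\times \times D_2^\times$. By Lemma \ref{sup lem}, $J_G$ is the supremum of $J_H$ over finite subgroups $H \leq G$; every such $H$ embeds in $(\mathrm{End}_k^0(E_1))^\times \times (\mathrm{End}_k^0(E_2))^\times$, and since a finite subgroup of $D_i^\times$ is automatically a finite subgroup of the units of the \emph{maximal} order (up to conjugation), one sees that the relevant finite subgroups are exactly the automorphism groups of polarizations — the groups $\mathrm{Aut}(E_1) \times \mathrm{Aut}(E_2)$ and, more precisely, the maximal ones are those listed in Theorem \ref{noniso case}. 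Thus by Lemma \ref{sup lem} again, $J_G$ equals the maximum of $J_H$ as $H$ ranges over the $14$ groups in Theorem \ref{noniso case} that can actually be realized in the given characteristic.

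Next I would split into the three characteristic cases according to which endomorphism algebras $D_i$ are possible. For $p \geq 5$: an elliptic curve over $\F_q$ has $\mathrm{End}_k^0(E_i)$ either imaginary quadratic (ordinary) or, in the supersingular case with all endomorphisms defined over $k$, a quaternion algebra — but the finite unit groups available are cyclic $C_2, C_4, C_6$ (from CM by $\Q(i), \Q(\zeta_3)$) since for $p \geq 5$ the extra automorphisms coming from the quaternionic supersingular case still only produce these cyclic groups when $p \equiv 1 \pmod{12}$, and in all cases $H$ is a product of two abelian groups, hence abelian, so $J_H = 1$; this matches the list $\{1\}$. For $p = 3$: now $E$ supersingular with $j = 0$ can have automorphism group $\mathrm{Dic}_{12}$ (order $12$), so the possible $H$ include $C_r \times C_s$, $C_r \times \mathrm{Dic}_{12}$, and $\mathrm{Dic}_{12} \times \mathrm{Dic}_{12}$; the first family is abelian ($J = 1$), the second has $J = 2$ by Lemma \ref{semiprod lem} together with the fact that $C_{2n}$ is maximal abelian normal in $\mathrm{Dic}_{4n}$ and Remark \ref{ell rmk}, and the third has $J = 4$ by Lemma \ref{dic lem} — giving exactly $\{1, 2, 4\}$. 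For $p = 2$: the supersingular curve $j = 0$ over $\F_q$ can have automorphism group $\mathfrak{T}^*$ of order $24$, so $H$ ranges over all $14$ groups of Theorem \ref{noniso case}; here $J_{\mathfrak{T}^*} = 12$ (Lemma \ref{T* lem}), $J_{\mathfrak{T}^* \times \mathfrak{T}^*} = 144$ (Lemma \ref{T* lem}), $J_{C_r \times \mathfrak{T}^*} = 12$ by Remark \ref{ell rmk}, and everything else is $\leq 12$, so the set of attainable values is $\{1, 12, 144\}$.

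The one subtle point — and the step I expect to be the main obstacle — is justifying the reduction from ``finite subgroups of $D^\times$'' to ``the $14$ groups in Theorem \ref{noniso case}.'' A finite subgroup $H \leq D_1^\times \times D_2^\times$ need not a priori sit inside $\mathrm{Aut}(E_1) \times \mathrm{Aut}(E_2)$; one must argue that any finite subgroup of $D_i^\times$ is conjugate (in $D_i^\times$) into the unit group of a maximal order, and then that the finite subgroups of the maximal order's unit group are precisely the finite cyclic / dicyclic / binary tetrahedral groups realized as $\mathrm{Aut}$ of an elliptic curve in the relevant characteristic. For imaginary quadratic $D_i$ this is elementary (the torsion in $\mathcal{O}_K^\times$ is cyclic of order dividing $6$); for the totally definite quaternion case one invokes the classification of finite subgroups of the Hamilton quaternions (they are cyclic, dicyclic, or one of $\mathfrak{T}^*, \mathfrak{O}^*, \mathfrak{I}^*$), intersected with the constraint that $D_i$ actually arise as an endomorphism algebra of an elliptic curve over $\F_q$ — which rules out $\mathfrak{O}^*, \mathfrak{I}^*$ and pins down the characteristic dependence. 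Once this dictionary is in place, the rest is the bookkeeping over the explicit list above, using Lemmas \ref{direct prod lem ab}, \ref{direct general lem}, \ref{semiprod lem}, \ref{dic lem}, \ref{T* lem} and Remark \ref{ell rmk}, together with the observation that the supremum in Lemma \ref{sup lem} is attained and equals the maximum over the (finitely many) maximal groups.
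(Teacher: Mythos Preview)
Your outline is correct in its essentials and would lead to a valid proof, but it routes through Theorem \ref{noniso case} in a way the paper deliberately avoids. The paper's own argument never invokes the classification of automorphism groups of polarized abelian surfaces; instead it splits directly according to the algebras $D_i$ (both imaginary quadratic; one imaginary quadratic and one $D_{p,\infty}$; both $D_{p,\infty}$), cites \cite[Lemmas 3.7 and 3.8]{Hwa2022} for the values $J_{D_i^{\times}}$, and then uses Remark \ref{ell rmk} for the mixed case and Lemmas \ref{dic lem}, \ref{T* lem} for case (iii). The Jordan property itself is obtained from \cite[Theorem 3]{Pop2014}. In effect the paper uses the multiplicativity $J_{D_1^{\times}\times D_2^{\times}} = J_{D_1^{\times}}\cdot J_{D_2^{\times}}$ (which follows from Lemma \ref{direct general lem} and Lemma \ref{sup lem}) and reads off the values of $J_{D_i^{\times}}$ from the known classification of finite subgroups of $D_{p,\infty}^{\times}$.

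Your detour through Theorem \ref{noniso case} is what forces the ``subtle point'' you flag: you must argue that maximal finite subgroups of $D^{\times}$ coincide with the maximal automorphism groups of polarized surfaces in that isogeny class. This is true (and your sketch via maximal orders and the Vign\'eras-type classification of finite subgroups of definite quaternion algebras is the right way to see it), but it is extra work the paper sidesteps entirely. Also, your phrasing for $p\geq 5$ is slightly tangled: the clean statement, used verbatim in the paper, is simply that every finite subgroup of $D_{p,\infty}^{\times}$ is cyclic for $p\geq 5$, so any finite subgroup of $G$ is abelian and $J_G=1$; there is no need to discuss $p\equiv 1\pmod{12}$ or which cyclic orders occur. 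If you strip out the appeal to Theorem \ref{noniso case} and argue directly from the finite-subgroup structure of $D_i^{\times}$, your proof collapses to the paper's.
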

\begin{proof}
Note first that $D=D_1 \times D_2$ and $G=D_1^{\times} \times D_2^{\times}.$ Also, we have the following three cases for $D_1$ and $D_2$ (up to permutation of factors) to consider: \\
(i) $D_1= \Q(\sqrt{-d_1})$ and $D_2 = \Q(\sqrt{-d_2})$ for some square-free positive integers $d_1$ and $d_2.$ \\
(ii) $D_1 = \Q(\sqrt{-d})$ and $D_2 = D_{p,\infty}$ for some square-free positive integer $d$. \\
(iii) $D_1 = D_2 = D_{p,\infty}.$ \\
In view of \cite[Lemma 3.4]{Hwa2022}, we know that $D_{p,\infty}^{\times}$ is a Jordan group, and hence, it follows from \cite[Theorem 3]{Pop2014} that $G$ is also a Jordan group in all the three cases in the above. \\

Now, we compute the desired Jordan constants explicitly. For the cases (i) and (ii), by Lemma \ref{sup lem}, Remark \ref{ell rmk} and \cite[Lemmas 3.7 and 3.8]{Hwa2022}, we obtain that $J_G \in \{1,2,12\}.$ \\ 
For case (iii), we note that all the finite subgroups of $D_{p,\infty}^{\times}$ are cyclic for $p \geq 5.$ Hence it suffices to consider the cases when $p=2$ or $p=3.$ If $p=3$ (resp.\ $p=2$), then we have $J_G = 4$ (resp. $J_G =144$) by Lemma \ref{dic lem} (resp.\ Lemma \ref{T* lem}). This completes the proof.
\end{proof}

Our next task is to obtain the Jordan constants of some finite subgroups of $GL_2(\C)$.
\begin{Lem}\label{dih lem}
For any integer $n \geq 4,$ we have $J_{D_n} = 2.$
\end{Lem}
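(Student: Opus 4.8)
The plan is to recognize $D_n$, the dihedral group of order $2n$, as a semidirect product $C_n \rtimes C_2$ and to extract the value of its Jordan constant directly from the structure of its finite subgroups — indeed $D_n$ is itself finite, so by Lemma \ref{sup lem} we only need to analyze the normal abelian subgroups of $D_n$ and of its subgroups. First I would observe that $C_n$ is an abelian normal subgroup of $D_n$ of index $2$, so that $J_{D_n} \leq 2$; this is exactly the content of Lemma \ref{semiprod lem}, which applies since $n \geq 4 \geq 3$. It then remains only to rule out $J_{D_n} = 1$, i.e.\ to check that $D_n$ is non-abelian for $n \geq 4$, which is immediate (the generating rotation and reflection do not commute when $n \geq 3$), combined with the fact recorded in the excerpt that a finite group has Jordan constant $1$ if and only if it is abelian.

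Thus the proof is essentially a one-line invocation of Lemma \ref{semiprod lem} together with the non-commutativity of $D_n$. I do not foresee any genuine obstacle here; the only mild subtlety is making sure the semidirect-product description $D_n \cong C_n \rtimes C_2$ is literally the one assumed in Lemma \ref{semiprod lem}, and that $n \geq 4$ comfortably satisfies the hypothesis $m \geq 3$ of that lemma. The lower bound $J_{D_n} \geq 2$ could alternatively be argued by exhibiting the Klein four-subgroup (when $n$ is even) or by noting directly that $D_n$ has no abelian subgroup of index $1$; either way the argument is routine.

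\begin{proof}
Since $n \geq 4$, we have $D_n \cong C_n \rtimes C_2$ with $n \geq 3$, so Lemma \ref{semiprod lem} gives $J_{D_n} = 2$.
\end{proof}
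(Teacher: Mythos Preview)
Your proposal is correct and essentially matches the paper's approach: the paper's one-line proof simply observes that $C_n \leq D_n$ (as a normal abelian subgroup of index $2$), which is exactly the content of Lemma~\ref{semiprod lem} that you invoke. If anything, your write-up is more careful than the paper's, since you explicitly address the lower bound $J_{D_n} \geq 2$ via non-abelianness, which the paper leaves implicit.
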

\begin{proof}
This immediately follows from the fact that $C_n \leq D_n.$
\end{proof}
Similar argument is applied to prove for the groups $G=C_{12} \rtimes C_2$ and $G=(C_6 \times C_6) \rtimes C_2$ in Theorem \ref{ordinary case}. For the other groups in the same list, we also obtain the following computation.
\begin{Lem}\label{other lem1}
Let $G =GL_2 (\F_3)$ or $G=\mathfrak{T}^* \rtimes C_4$ (resp.\ $G=\mathfrak{T}^* \times C_3$). Then we have $J_G = 24$ (resp.\ $J_G =12$).
\end{Lem}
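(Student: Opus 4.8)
The plan is to compute $J_G$ for each of the three groups $G = GL_2(\mathbb{F}_3)$, $G = \mathfrak{T}^* \rtimes C_4$, and $G = \mathfrak{T}^* \times C_3$ by identifying a maximal normal abelian subgroup in each case. Since these are all finite groups, $J_G$ equals the smallest index $[G : A]$ over all normal abelian subgroups $A \trianglelefteq G$; equivalently, $J_G = |G| / m$ where $m$ is the largest order of a normal abelian subgroup. So for each group I would first pin down $|G|$, then locate the largest abelian normal subgroup.

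First I would handle $G = \mathfrak{T}^* \times C_3$, which is the easiest. Here $|G| = 24 \cdot 3 = 72$. By Lemma \ref{direct general lem}, any abelian normal subgroup $H$ of $G$ sits inside $\pi_1(H) \times \pi_2(H)$, a product of abelian normal subgroups of $\mathfrak{T}^*$ and $C_3$. Since the largest abelian normal subgroup of $\mathfrak{T}^*$ is $C_2$ (this was already recorded in the proof of Lemma \ref{T* lem}, where $J_{\mathfrak{T}^*} = 12$), and the only abelian normal subgroup of $C_3$ is $C_3$ itself, the largest such product has order $2 \cdot 3 = 6$; and indeed $C_2 \times C_3$ is abelian and normal in $G$. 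Hence $J_G = 72/6 = 12$.

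Next I would treat $G = GL_2(\mathbb{F}_3)$, of order $48$. The center $Z(G) = \{\pm I\} \cong C_2$ is the key. One knows $SL_2(\mathbb{F}_3) = \mathfrak{T}^*$ is the unique index-$2$ normal subgroup containing... more carefully: the derived series / normal subgroup lattice of $GL_2(\mathbb{F}_3)$ consists of $1$, $Z \cong C_2$, $Q_8$, $SL_2(\mathbb{F}_3)$, and $GL_2(\mathbb{F}_3)$. Among these, only $1$ and $Z$ are abelian, and $Q_8, SL_2(\mathbb{F}_3), GL_2(\mathbb{F}_3)$ are non-abelian. So the largest abelian normal subgroup has order $2$, giving $J_G = 48/2 = 24$. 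I would justify the normal subgroup list either by a direct check (any normal subgroup is a union of conjugacy classes, and $GL_2(\mathbb{F}_3)$ has conjugacy classes of sizes $1,1,6,6,8,8,12,6$ summing appropriately) or by citing the structure $GL_2(\mathbb{F}_3) \cong \mathfrak{T}^* \rtimes C_2$ together with the fact that $\mathfrak{T}^*$ has a unique (normal) subgroup $C_2$ and no larger characteristic abelian subgroup.

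Finally, for $G = \mathfrak{T}^* \rtimes C_4$, of order $24 \cdot 4 = 96$: this group (appearing as item (9) in Theorem \ref{ordinary case} and item (11) in Theorem \ref{supersing case}) has $\mathfrak{T}^*$ as a normal subgroup of index $4$. I would argue that a maximal abelian normal subgroup has order $4$: the subgroup $Q_8 \trianglelefteq \mathfrak{T}^*$ is characteristic in $\mathfrak{T}^*$, hence normal in $G$, and its center $Z(Q_8) \cong C_2$ is characteristic in $Q_8$ hence normal in $G$; one checks that together with a suitable element of the $C_4$-part one gets a normal $C_4$ (or $C_2 \times C_2$) of order $4$, and that nothing abelian and normal is larger — any abelian normal subgroup of order $8$ or more would have to meet $\mathfrak{T}^*$ in an abelian normal subgroup of $\mathfrak{T}^*$ of order $\geq 2$ and project onto a subgroup of $C_4$, and a case analysis using the non-central action of $C_4$ on $Q_8 \subset \mathfrak{T}^*$ rules this out. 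This yields $J_G = 96/4 = 24$. The main obstacle, and the step requiring the most care, is precisely this last verification that no abelian normal subgroup of $\mathfrak{T}^* \rtimes C_4$ exceeds order $4$, since here (unlike the direct-product cases) Lemma \ref{direct general lem} does not apply and one must analyze the semidirect product structure and the action explicitly.
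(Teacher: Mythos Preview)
Your treatment of $\mathfrak{T}^* \times C_3$ and $GL_2(\mathbb{F}_3)$ is essentially identical to the paper's: Lemma~\ref{direct general lem} for the direct product, and the explicit list of normal subgroups $1,\,C_2,\,Q_8,\,SL_2(\mathbb{F}_3)$ for $GL_2(\mathbb{F}_3)$.

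For $\mathfrak{T}^* \rtimes C_4$ the paper takes a different route: it simply cites the GroupNames database for the complete list of abelian normal subgroups ($C_1,\,C_2,\,C_4$) and reads off $J_G = 96/4 = 24$. Your proposal instead argues structurally, using that $N \cap \mathfrak{T}^*$ must be an abelian normal subgroup of $\mathfrak{T}^*$ (hence of order $\leq 2$) and that $[N : N \cap \mathfrak{T}^*] \leq [G : \mathfrak{T}^*] = 4$, which gives $|N| \leq 8$; you then need a further case analysis to exclude $|N| = 8$. This is a legitimate and more self-contained approach, and you are right that the exclusion of an abelian normal subgroup of order $8$ is the only genuinely delicate point. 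The paper's shortcut avoids that analysis entirely at the cost of relying on an external computation; your approach would yield an intrinsic proof but, as you acknowledge, the final case check still needs to be written out carefully (concretely: if $|N|=8$ then $N \cap \mathfrak{T}^* = Z(\mathfrak{T}^*)$ and $N$ surjects onto $C_4$, so $N$ would contain an element outside $\mathfrak{T}^*$ whose conjugation action on $\mathfrak{T}^*$ is nontrivial yet must centralize $N$, and one checks this is incompatible with $N$ being abelian and normal).
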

\begin{proof}
We consider each group one by one. \\
(i) For $G=GL_2(\F_3)$, it is well known that the only proper normal subgroups of $G$ are $C_1, C_2, Q_8, SL_2(\F_3)$, among which only $C_1$ and $C_2$ are abelian. Hence we get $J_G = 24.$  \\
(ii) For $G=\mathfrak{T}^* \times C_3,$ we see that $C_2 \times C_3 \cong C_6$ is the maximal abelian normal subgroup of $G$ by Lemma \ref{direct general lem}, and hence, we get $J_G = 12.$ \\
(iii) For $G=\mathfrak{T}^* \rtimes C_4,$ we know from \cite{GN(1)} that the only abelian normal subgroups of $G$ are $C_1, C_2, C_4,$ and hence, we get $J_G = 24.$ This completes the proof.
\end{proof}

Combining all these computations, we obtain:
\begin{Thm}\label{powordelli}
Let $E$ be an ordinary elliptic curve over a finite field $k=\mathbb{F}_q$  where $q=p^a$ for a prime $p$, and let $X=E^2$ with $D:=\textrm{End}_k^0(X).$ Then $G:=D^{\times}$ is a Jordan group and we have
$$ J_G  \in \{ 2, 12, 24\}.$$ 
\end{Thm}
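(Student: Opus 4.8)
\textbf{Proof proposal for Theorem \ref{powordelli}.}
The plan is to reduce the statement to a finite problem via the classification of endomorphism algebras of ordinary elliptic curves over finite fields, and then apply Lemma \ref{sup lem} together with the explicit Jordan-constant computations assembled above. First I would recall that since $E$ is ordinary, $D_0 := \textrm{End}_k^0(E)$ is an imaginary quadratic field (possibly after noting the cases where all endomorphisms are defined over $k$), so that $D = \textrm{End}_k^0(E^2) \cong M_2(D_0)$ is a central simple algebra of degree $2$ over the imaginary quadratic field $D_0$. Consequently $G = D^{\times} = GL_2(D_0)$, and every finite subgroup of $G$ embeds (after extending scalars to $\C$ via a splitting, since $D_0 \hookrightarrow \C$) into $GL_4(\C)$; in particular $G$ is a Jordan group by Jordan's classical theorem, and by Lemma \ref{sup lem} we have $J_G = \sup_{H \leq G} J_H$ over finite subgroups $H$.

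Next I would invoke the classification in Theorem \ref{ordinary case}: the maximal finite subgroups of $GL_2(D_0)$ arising as automorphism groups of polarized abelian surfaces in the isogeny class of $E^2$ are exactly the nine groups listed there. By Lemma \ref{sup lem} it suffices to compute $J_H$ for each such maximal $H$ and take the maximum, since every finite subgroup is contained in such a maximal one (this is the point where one uses that a maximal finite subgroup of $\textrm{Aut}(X)$ for a suitable polarization captures all finite subgroups up to conjugacy — the content behind citing \cite{Hwa2021}). I would then run through the list: for $D_r$ with $r \in \{4,6\}$, $J_{D_r} = 2$ by Lemma \ref{dih lem}; for $\textrm{Dic}_{12}$, the cyclic subgroup $C_6$ is a maximal abelian normal subgroup of index $2$, so $J = 2$; for $C_{12} \rtimes C_2$ and $(C_6 \times C_6) \rtimes C_2$, the same index-$2$ argument as in Lemma \ref{dih lem} (remarked right after it) gives $J = 2$; for $\mathfrak{T}^* \cong SL_2(\F_3)$ one has $J = 12$ (recorded in the proof of Lemma \ref{T* lem}); for $\mathfrak{T}^* \times C_3$, $J = 12$ by Lemma \ref{other lem1}(ii); and for $GL_2(\F_3)$ and $\mathfrak{T}^* \rtimes C_4$, $J = 24$ by Lemma \ref{other lem1}(i),(iii). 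Taking the supremum of $\{2, 2, 2, 12, 2, 24, 2, 12, 24\}$ yields $J_G \in \{2, 12, 24\}$, with the precise value determined by which maximal group actually occurs (i.e.\ by how many endomorphisms of $E$ and isogenies are defined over $k$).

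The main obstacle I anticipate is the justification that the supremum over \emph{all} finite subgroups is realized by one of the nine listed maximal groups — that is, that every finite subgroup of $GL_2(D_0)$ is conjugate into $\textrm{Aut}(X)$ for some polarized $X$ in the isogeny class of $E^2$. This is a statement about finite subgroups of $GL_2$ over an imaginary quadratic field (equivalently, about which finite groups preserve some positive-definite Hermitian form on a rank-$2$ $D_0$-lattice), and it is precisely what is packaged in \cite{Hwa2021}; I would simply cite it rather than reprove it. A secondary, purely bookkeeping point is confirming each $J_H$ via Lemma \ref{direct general lem} for the direct-product cases and via the explicit list of normal subgroups for $GL_2(\F_3)$ and $\mathfrak{T}^* \rtimes C_4$, but those are the routine computations already done in the preceding lemmas.
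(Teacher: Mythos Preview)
Your proposal is correct and follows essentially the same route as the paper: identify $D=M_2(D_0)$ with $D_0$ imaginary quadratic, embed $G$ into some $GL_n(\C)$ to get the Jordan property, then use Lemma \ref{sup lem} together with the classification in Theorem \ref{ordinary case} and the group-by-group computations of Lemmas \ref{dih lem} and \ref{other lem1}. One small simplification: since $D_0$ is a \emph{field} contained in $\C$, you have $G=GL_2(D_0)\leq GL_2(\C)$ directly, so there is no need to pass to $GL_4(\C)$.
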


\begin{proof}
Note first that $D=M_2(\Q(\sqrt{-d}))$ for some square-free positive integer $d.$ Thus we see that $G$ is Jordan in view of the fact that $GL_2(\C)$ is Jordan and $G \leq GL_2(\C)$. \\

The desired result on the Jordan constants follows from \cite[Lemmas 3.7 and 3.8]{Hwa2022}, Theorem \ref{ordinary case}, Lemmas \ref{sup lem}, \ref{dih lem}, and \ref{other lem1}. \\
\indent This completes the proof.
\end{proof}

For our last result, we begin with the following interesting lemma on the value of the Jordan constant of a relatively large group.
\begin{Lem}\label{extraspecial lem}
Let $G=2_{-}^{1+4}.\textrm{Alt}_5$, and let $H$ be a nontrivial normal abelian subgroup of $G.$ Then $H=C_2$, and in particular, we have $J_G = 960.$
\end{Lem}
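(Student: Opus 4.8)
The plan is to analyze the group $G = 2_{-}^{1+4}.\mathrm{Alt}_5$ via its extraspecial normal subgroup. Recall that $N := 2_{-}^{1+4}$ is an extraspecial $2$-group of order $32$ whose center $Z(N) \cong C_2$; it sits inside $G$ as a normal subgroup with quotient $G/N \cong \mathrm{Alt}_5$, and the conjugation action of $G$ on $N$ realizes $\mathrm{Alt}_5 \hookrightarrow \mathrm{Out}(N)$ acting irreducibly on the $4$-dimensional $\F_2$-vector space $N/Z(N)$ (preserving the associated symplectic/quadratic form of minus type). I would first record these structural facts, and then argue that $Z(N)$ is the unique nontrivial normal abelian subgroup of $G$; everything else follows, since $|G| = 32 \cdot 60 = 1920$ and $|G|/|C_2| = 960$.

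First I would show $Z(N) \leq H$ for any nontrivial normal abelian subgroup $H$ of $G$, or else handle $H$ directly. Suppose $H \trianglelefteq G$ is abelian and nontrivial. Consider $H \cap N$: it is normal in $G$ and contained in $N$. Since $\mathrm{Alt}_5$ acts irreducibly on $N/Z(N)$, the only $G$-invariant subgroups of $N$ are $1$, $Z(N)$, and $N$ itself; but $N$ is nonabelian of order $32$, so $N$ is not a candidate for (a subgroup of) the abelian $H$. Hence $H \cap N \in \{1, Z(N)\}$. If $H \cap N = Z(N)$, then since $Z(N) \leq H$ and $H$ is abelian, $H$ centralizes $Z(N)$ — automatic — and I must show $H = Z(N)$: the image $\bar H$ of $H$ in $G/N \cong \mathrm{Alt}_5$ is a normal subgroup, hence trivial or all of $\mathrm{Alt}_5$ (as $\mathrm{Alt}_5$ is simple); if $\bar H = \mathrm{Alt}_5$ then $H$ surjects onto $\mathrm{Alt}_5$, forcing $H$ (abelian) to have an abelian quotient isomorphic to $\mathrm{Alt}_5$, a contradiction, so $\bar H = 1$, i.e. $H \leq N$, giving $H = Z(N) = C_2$. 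The remaining case is $H \cap N = 1$: then $H$ embeds into $G/N \cong \mathrm{Alt}_5$ as a normal subgroup, so again $H = 1$ by simplicity of $\mathrm{Alt}_5$, contradicting nontriviality. Therefore $H = C_2 = Z(N)$ in all cases.

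Having pinned down the unique nontrivial normal abelian subgroup $C_2$, I would then conclude the Jordan constant: $C_2$ is a normal abelian subgroup of $G$ of index $|G|/2 = 960$, and no normal abelian subgroup of smaller index exists (the only normal abelian subgroups are $1$ and $C_2$). Hence $J_G = 960$. For this final step I should make sure the definition of $J_G$ is being applied to the single finite group $G$ — for a finite group, $J_G$ is exactly the minimal index of a normal abelian subgroup — so the computation is immediate once the normal abelian subgroup structure is known.

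The main obstacle is the claim that $\mathrm{Alt}_5$ acts irreducibly on $N/Z(N) \cong \F_2^4$, which is what kills off the possibility of intermediate $G$-invariant subgroups of $N$. This is standard — it is precisely the defining feature of the extraspecial group $2_{-}^{1+4}$ together with its automorphism action coming from $\mathrm{Alt}_5 \cong \mathrm{O}_4^-(2)'$ (the commutator subgroup of the orthogonal group of minus type), which acts with no nonzero proper invariant subspace — but it is the one place where I would either cite a reference on extraspecial groups and their automorphism groups (e.g. the ATLAS description of $2_{-}^{1+4}$, or Gorenstein's \emph{Finite Groups}) or give a short direct argument: the $15$ nontrivial cosets of $Z(N)$ in $N$ split under $\mathrm{Alt}_5$ according to the value of the quadratic form, and the unique orbit structure leaves no room for a proper invariant subgroup. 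With that in hand the rest is routine group theory using only the simplicity of $\mathrm{Alt}_5$.
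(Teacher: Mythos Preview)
Your argument is correct and follows essentially the same path as the paper's: both exploit the extension $1 \to 2_-^{1+4} \to G \to \mathrm{Alt}_5 \to 1$ together with the irreducibility of the induced $\mathrm{Alt}_5$-action on $N/Z(N)\cong\F_2^{\,4}$ to force any nontrivial abelian normal subgroup down to $Z(N)=C_2$. The only differences are cosmetic: the paper first uses the simplicity of $\mathrm{Alt}_5$ to push $H$ into the extraspecial kernel and then analyzes its image in $V$, whereas you begin with $H\cap N$; and the paper justifies irreducibility by noting that an element of order $5$ cannot act nontrivially on $\F_2^{\,2}$ or $\F_2^{\,3}$, while you invoke the identification $\mathrm{Alt}_5 \cong O_4^-(2)'$.
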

\begin{proof}
Note first that there is a short exact sequence
$$
\xymatrix{
1 \ar[r] &2_{-}^{1+4} \ar[r]&  G \ar[r]^{\phi} & \textrm{Alt}_5 \ar[r] & 1}
$$
by the definition of the group $G.$ \\

Let $N$ be a (nontrivial) abelian normal subgroup of $G$. Then the image $\phi(N)$ of $N$ under $\phi$ is an abelian normal subgroup of $\textrm{Alt}_5$, and hence, we get $\phi(N)=\{0\}.$ It follows that $N \leq K:= \ker \phi \cong 2_{-}^{1+4},$ and hence, $Z:=Z(K)=C_2$ and $K/Z$ is an elementary abelian $2$-group i.e.\ a vector space $V$ of dimension $4$ over $\F_2.$ \\
Now, every automorphism of $G$ preserves $K$ as a whole, whence preserves $Z$ as a whole so that it fixes $Z$ elementwise. Thus we have $Z \subseteq Z(G)$ and $Z$ is normal and abelian. It follows that if $N$ is a (nontrivial) maximal abelian normal subgroup of $G$, then $N$ is a subgroup of $K$ containing $Z$ and it can be identified with its image in $V$ via the natural projection $\pi \colon K \rightarrow K/Z=V.$ \\
Now, we have an action of $\textrm{Alt}_5$ on $V$ (by lifting to an element of $G$ and conjugating). Note that this representation on $V$ is not trivial, and hence, it is irreducible. Note also that $\textrm{Alt}_5$ cannot act non-trivially on $\F_2^3$ or $\F_2^2$ because an element of order $5$ in $\textrm{Alt}_5$ cannot do so. Hence we have either $\pi(N)=\{0\}$ or $\pi(N)=V,$ or equivalently, either $N=Z$ or $N=K$. Since $K$ is not abelian, it follows that $N=Z$ i.e.\ the only maximal abelian normal subgroup of $G$ equals $Z$, which is isomorphic to $C_2.$ The second assertion immediately follows. \\

This completes the proof.
\end{proof}

The next three lemmas deal with the finite groups, which seem to appear only in small characteristic.
\begin{Lem}\label{ch 2 lem}
Let $G=SL_2(\F_3) \times S_3$ (resp.\ $G=SL_2(\F_3))^2 \rtimes S_2$). Then we have $J_G = 24$ (resp.\ $J_G = 288$).
\end{Lem}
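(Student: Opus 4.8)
The plan is to treat each of the two direct-product-like groups separately, in both cases reducing the computation of $J_G$ to identifying the maximal abelian normal subgroups via Lemma \ref{direct general lem}. First consider $G = SL_2(\F_3) \times S_3 \cong \mathfrak{T}^* \times S_3$. By Lemma \ref{direct general lem} every abelian normal subgroup $H$ of $G$ sits inside $\pi_1(H) \times \pi_2(H)$, a product of abelian normal subgroups of $\mathfrak{T}^*$ and $S_3$. The maximal abelian normal subgroup of $\mathfrak{T}^*$ is $C_2 = Z(\mathfrak{T}^*)$ (already used, with $J_{\mathfrak{T}^*}=12$, in the proof of Lemma \ref{T* lem}), and the maximal abelian normal subgroup of $S_3$ is $C_3 = A_3$. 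Hence the largest abelian normal subgroup of $G$ has order at most $6$, realized by $C_2 \times C_3 \cong C_6$, which is indeed normal and abelian, so $J_G = |G|/6 = 48 \cdot 6 / 6$; since $|SL_2(\F_3)| = 24$ and $|S_3| = 6$, we get $|G| = 144$ and $J_G = 144/6 = 24$, matching the claim.

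For $G = (SL_2(\F_3))^2 \rtimes S_2$, the strategy is to first locate the abelian normal subgroups inside the index-$2$ normal subgroup $N := (SL_2(\F_3))^2$, then argue that no abelian normal subgroup of $G$ can meet the complementary $S_2$. An abelian normal subgroup $H$ of $G$ is in particular an abelian normal subgroup of $N = \mathfrak{T}^* \times \mathfrak{T}^*$; by Lemma \ref{direct general lem} (applied to the two $\mathfrak{T}^*$ factors) any such subgroup lies inside $C_2 \times C_2 = Z(N)$. The swap involution in $S_2$ acts on $Z(N) = C_2 \times C_2$ by interchanging the two coordinates, so the subgroups of $Z(N)$ that are normalized by $G$ are $C_1$, the diagonal $C_2$, and all of $C_2 \times C_2$; since $C_2 \times C_2$ is itself abelian and $S_2$-stable, it is an abelian normal subgroup of $G$, of order $4$. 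To finish one must check that this is maximal among abelian normal subgroups of $G$ — equivalently, that $H \le Z(N)$ always. Since $H \trianglelefteq G$ and $H \cap N \trianglelefteq N$ is abelian, $H \cap N \le Z(N)$; if $H \not\le N$ then $H$ surjects onto $S_2$ and $H = (H\cap N)\rtimes C_2$, but then for $H$ to be abelian the $C_2$ must centralize $H \cap N$ and the swap action must be trivial on $H\cap N$, forcing $H \cap N \le \{(z,z)\}$, the diagonal $C_2$; this gives $|H| \le 4$ as well. Thus the maximal abelian normal subgroups of $G$ have order $4$, and $J_G = |G|/4 = (24^2 \cdot 2)/4 = 1152/4 = 288$.

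The main obstacle is the second case: unlike $SL_2(\F_3)\times S_3$, the group $(SL_2(\F_3))^2 \rtimes S_2$ is not literally a direct product, so Lemma \ref{direct general lem} does not apply to $G$ directly and one must handle the extension by $S_2$ by hand. The delicate point is the case analysis for an abelian normal $H$ with $H \not\le N$: one needs that such an $H$ cannot be larger than the diagonal $C_2$ inside $N$, which follows once one observes that commuting with a swap-type element of $G$ forces an element of $N$ into the diagonal. Care is also needed to confirm that $C_2 \times C_2 = Z(N)$ really is normal in the full $G$ (it is $S_2$-stable since the swap merely permutes coordinates) and abelian, so that the value $4$ is actually attained and not merely an upper bound. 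Everything else — the orders $|SL_2(\F_3)| = 24$, $|S_3| = 6$, $|S_2| = 2$, and the divisions $144/6 = 24$, $1152/4 = 288$ — is routine arithmetic.
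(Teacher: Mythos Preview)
Your argument is correct and follows essentially the same route as the paper: Lemma~\ref{direct general lem} handles $SL_2(\F_3)\times S_3$ directly, and for $(SL_2(\F_3))^2\rtimes S_2$ both you and the paper reduce to the index-$2$ base $B=\mathfrak{T}^*\times\mathfrak{T}^*$ and use that any abelian normal subgroup of $B$ lies in $Z(B)=C_2\times C_2$. The only difference is that the paper asserts (without detail) that an abelian normal subgroup meeting the nontrivial $S_2$-coset cannot occur, whereas you allow that case and bound its order by $4$ via the swap action on the diagonal --- a slightly more explicit variant yielding the same $J_G=288$.
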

\begin{proof}
(i) The case of $G=SL_2(\F_3) \times S_3$ follows from Lemma \ref{direct general lem}. \\
(ii) For $G=(SL_2(\F_3))^2 \rtimes S_2,$ let $B=\mathfrak{T}^* \times \mathfrak{T}^*$, and let $N$ be a (nontrivial) abelian normal subgroup of $G$. Then we have $[N : B \cap N] \leq [G:B]=2.$ Now, if $[N : B \cap N ] = 2,$ then $N$ cannot be abelian, and hence, we get $[N: B \cap N] = 1$, whence $N \leq B.$ It follows that a maximal such $N$ equals $C_2 \times C_2 = Z(B)$, which in turn, implies that $J_G = 288.$

This completes the proof.
\end{proof}

\begin{Lem}\label{ch 3 lem}
Let $G=SL_2(\F_9)$ (resp.\ $G=C_3:(SL_2(\F_3).2)$, resp.\ $G=\textrm{Dic}_{12}^2 \rtimes S_2$). Then we have $J_G = 360$ (resp.\ $J_G=24$, resp.\ $J_G=8$).
\end{Lem}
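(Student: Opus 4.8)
The plan is to treat the three groups $G=SL_2(\F_9)$, $G=C_3:(SL_2(\F_3).2)$, and $G=\textrm{Dic}_{12}^2 \rtimes S_2$ one at a time, in each case locating a maximal normal abelian subgroup $N$ and reading off $J_G = [G:N]$ (using that, for a finite group, $J_G$ equals the minimal index of a normal abelian subgroup, which is what all the earlier lemmas in this section implicitly use).

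For $G=SL_2(\F_9)$, the key observation is that $SL_2(\F_9)$ is quasisimple: its center is $Z=\{\pm I\}\cong C_2$ and $SL_2(\F_9)/Z\cong \textrm{PSL}_2(\F_9)\cong \textrm{Alt}_6$ is simple and nonabelian. Hence any normal abelian subgroup $N$ has abelian (so trivial) image in $\textrm{PSL}_2(\F_9)$, forcing $N\leq Z$; and $Z$ itself is central, hence normal and abelian. So the unique maximal normal abelian subgroup is $Z\cong C_2$, giving $J_G = |SL_2(\F_9)|/2 = 720/2 = 360$. For $G=C_3:(SL_2(\F_3).2)$ I would argue similarly via the structure: writing $K$ for the normal subgroup isomorphic to $SL_2(\F_3).2$ (a group containing $\mathfrak{T}^*=SL_2(\F_3)$ with only $C_1,C_2$ as abelian normal subgroups, as in Lemma~\ref{other lem1}), or more directly identifying $G$ with a known group of order $144$ whose normal subgroup lattice shows the largest abelian normal subgroup has index $24$; the cleanest route is to exhibit a normal abelian subgroup of order $6$ (from the ``$C_3$'' together with a central $C_2$) and then check, using the quotient structure, that no normal abelian subgroup can be larger, yielding $J_G = 144/6 = 24$. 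For $G=\textrm{Dic}_{12}^2\rtimes S_2$ the argument mirrors part (ii) of Lemma~\ref{ch 2 lem} verbatim: set $B=\textrm{Dic}_{12}\times\textrm{Dic}_{12}$, a normal subgroup of index $2$; any normal abelian $N$ satisfies $[N:B\cap N]\leq 2$, and equality would make $N$ nonabelian (since an element swapping the two factors cannot centralize a diagonal element), so $N\leq B$; by Lemma~\ref{direct general lem} and the fact that $C_6$ is a maximal normal abelian subgroup of $\textrm{Dic}_{12}$ (as in Lemma~\ref{dic lem}), the largest such $N$ is $C_6\times C_6$, but one must further check this $C_6\times C_6$ is actually \emph{normal} in the full $G$ (it is, being characteristic in $B$ as the unique maximal normal abelian subgroup, hence preserved by the outer swap), giving $J_G = |\textrm{Dic}_{12}^2\rtimes S_2|/36 = 288/36 = 8$.

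The main obstacle I expect is not any single computation but verifying the \emph{maximality} claims cleanly — i.e.\ ruling out normal abelian subgroups larger than the proposed $N$. For $SL_2(\F_9)$ this is painless because of quasisimplicity. For $\textrm{Dic}_{12}^2\rtimes S_2$ it reduces to the index-$2$ trick plus a characteristic-subgroup argument, which is routine. The genuinely fiddly case is $C_3:(SL_2(\F_3).2)$: one has to pin down the precise extension (the colon notation indicates a non-split or at least specific extension of $C_3$ by $SL_2(\F_3).2$) and understand how $SL_2(\F_3).2$ acts on the normal $C_3$. I would handle this by invoking the classification of groups of order $144$ (or a direct \texttt{GAP}/\cite{GN(1)}-style lookup of its normal subgroup lattice, consistently with how Lemma~\ref{other lem1}(iii) was argued), to confirm that the maximal normal abelian subgroup has order $6$. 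Once the structural facts are in hand, each $J_G$ value drops out as the stated index.
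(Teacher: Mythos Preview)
Your plan is correct and, for parts (i) and (ii), coincides with the paper's proof: the paper also invokes quasisimplicity of $SL_2(\F_9)$ (so $Z(G)=C_2$ is the unique maximal normal abelian subgroup), and for $C_3:(SL_2(\F_3).2)$ it likewise resorts to a GroupNames lookup listing the normal abelian subgroups as $C_1,C_2,C_3,C_6$, exactly as you anticipate.

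Part (iii) is where you genuinely diverge. The paper simply cites a GroupNames lookup to list all normal abelian subgroups of $\textrm{Dic}_{12}^2\rtimes S_2$ (namely $C_1,C_2,C_2^2,C_3^2,C_3\times C_6,C_6^2$) and reads off $J_G=8$. You instead transplant the index-$2$ argument from Lemma~\ref{ch 2 lem}(ii): reduce to $N\leq B=\textrm{Dic}_{12}\times\textrm{Dic}_{12}$, then use Lemma~\ref{direct general lem} and the fact that $C_6\times C_6$ is characteristic in $B$ (as the set of elements of order dividing $6$). This is a cleaner, self-contained argument that avoids the external database and makes the parallel with Lemma~\ref{ch 2 lem}(ii) explicit; the paper's route is faster but opaque. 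One small point to tighten when you write it up: the claim ``$[N:B\cap N]=2$ forces $N$ nonabelian'' is really only needed when $B\cap N$ is large enough to matter (i.e.\ $B\cap N=C_6\times C_6$), and in that case your centralizer observation (a swap-type element cannot commute with $(c,1)$ for $c\neq 1$) does the job; phrasing it that way avoids overclaiming.
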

\begin{proof}
(i) The case of $G=SL_2(\F_9)$ follows from a well-known fact that $C_2 =Z(G)$ is the maximal abelian normal subgroup of $G.$  \\
(ii) For $G=C_3:(SL_2(\F_3).2),$ we know from \cite{GN(2)} that the only normal abelian subgroups of $G$ are $C_1, C_2, C_3, C_6,$ and hence, we get $J_G = 24.$ \\
(iii) For $G=\textrm{Dic}_{12}^2 \rtimes S_2,$ we know from \cite{GN(3)} that the only abelian normal subgroups of $G$ are $C_1, C_2, C_2^2, C_3^2, C_3 \times C_6, C_6^2,$ and hence, we get $J_G = 8.$

This completes the proof.
\end{proof}

\begin{Lem}\label{ch 5 lem}
Let $G=SL_2(\F_5).2$ or $G=SL_2(\F_5):2.$ Then we have $J_G = 120.$
\end{Lem}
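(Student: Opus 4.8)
The plan is to pin down a maximal abelian normal subgroup of $G$ and then to argue that $G$ itself is the ``worst'' finite subgroup of $G$. Note first that $|G| = 240$ and that there is a short exact sequence
\[ 1 \longrightarrow K \longrightarrow G \stackrel{\phi}{\longrightarrow} C_2 \longrightarrow 1, \qquad K := SL_2(\F_5) \cong \mathfrak{I}^* . \]
Since $Z := Z(K) \cong C_2$ is characteristic in the normal subgroup $K$ and $\textrm{Aut}(C_2) = 1$, the subgroup $Z$ is central in $G$, hence a nontrivial abelian normal subgroup of $G$ of index $120$. The crux is to show that $Z$ is a \emph{maximal} abelian normal subgroup of $G$.

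To this end I would classify all abelian normal subgroups $N$ of $G$. Since $K/Z \cong \textrm{Alt}_5$ is simple and $K = SL_2(\F_5)$ is perfect (so has no index-$2$ subgroup), the only normal subgroups of $K$ are $1$, $Z$, and $K$, of which only $1$ and $Z$ are abelian; as $N \cap K$ is an abelian normal subgroup of $K$, this gives $N \cap K \in \{1, Z\}$, and together with the embedding $N/(N \cap K) \hookrightarrow G/K \cong C_2$ it forces $|N| \le 4$. It then remains to rule out $|N| = 4$. In that case necessarily $N \cap K = Z$, so $Z \le N$ and $N = \langle Z, x \rangle$ for some $x \in N \setminus K$; since $N \trianglelefteq G$ while $kxk^{-1} \notin K$ for every $k \in K$, normality forces $kxk^{-1} \in Zx$ for all $k \in K$, so, reducing modulo $Z$, the image $\bar x$ of $x$ centralizes $K/Z$ in $G/Z$. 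Here one uses the key structural fact that $G/Z \cong S_5$ in both cases (equivalently, the $C_2$ lying over $SL_2(\F_5)$ acts via the outer automorphism) — this is exactly what separates these two groups from the direct product $SL_2(\F_5) \times C_2$, for which the lemma would be false. Since $\bar x$ lies in $S_5 \setminus \textrm{Alt}_5$ (because $x \notin K$), conjugation by $\bar x$ is an outer automorphism of $\textrm{Alt}_5 \cong K/Z$, in particular nontrivial, contradicting that $\bar x$ centralizes $K/Z$. Hence every abelian normal subgroup of $G$ has order at most $2$, so $Z$ is a maximal abelian normal subgroup of $G$ and $[G : Z] = 120$; consequently $G$ itself, regarded as a finite subgroup of $G$, admits no abelian normal subgroup of index less than $120$, and therefore $J_G \ge 120$. (Alternatively, as in the proofs of the preceding lemmas, one may simply quote from a group database that the abelian normal subgroups of $G$ are $C_1$ and $C_2$.)

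For the reverse inequality, every proper subgroup $H$ of $G$ has order a proper divisor of $240$, hence $|H| \le 120$, so $H$ has an abelian normal subgroup (the trivial one) of index $\le |H| \le 120$; combined with the subgroup $Z \trianglelefteq G$ of index $120$, this shows that every finite subgroup of $G$ admits an abelian normal subgroup of index $\le 120$, i.e.\ $J_G \le 120$. Together with the lower bound, $J_G = 120$. I expect the main obstacle to be the exclusion of abelian normal subgroups of order $4$: this is the one place where the precise isomorphism type of $G$ is needed — namely that $G/Z(SL_2(\F_5)) \cong S_5$ rather than $\textrm{Alt}_5 \times C_2$ — and knowing only that $G$ is \emph{some} extension of $C_2$ by $SL_2(\F_5)$ would allow the value to drop to $60$.
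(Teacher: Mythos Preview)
Your argument is correct and considerably more detailed than the paper's. The paper's proof is a single sentence: it cites the GroupNames database for the fact that the only abelian normal subgroups of $G$ are $C_1$ and $C_2$, and concludes $J_G=120$ directly. You instead work out this fact by hand via the extension $1\to SL_2(\F_5)\to G\to C_2\to 1$: bounding $|N|\le 4$ from $N\cap K\in\{1,Z\}$ and then eliminating $|N|=4$ using $G/Z\cong S_5$. You also make explicit the (easy) upper bound $J_G\le 120$ coming from $|H|\le 120$ for proper $H\le G$, which the paper leaves implicit. The one place where you invoke an outside fact is the identification $G/Z\cong S_5$ for both groups; you correctly isolate this as the decisive structural input distinguishing these extensions from $SL_2(\F_5)\times C_2$, and you note that one could instead fall back on the database --- which is exactly what the paper does. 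Your approach buys an explanation of \emph{why} the abelian normal subgroups stop at $C_2$ and why the value would drop to $60$ for the direct product; the paper's approach buys brevity.
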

\begin{proof}
In both cases, the only abelian normal subgroups of $G$ are $C_1, C_2$ (see \cite{GN(4),GN(5)}), and hence, it follows that $J_G = 120$.
\end{proof}

Finally, using all these results, we have:
\begin{Thm}\label{pow of supell}
 Let $E$ be a supersingular elliptic curve over a finite field $k=\mathbb{F}_q$ where $q=p^a$ for a prime $p$ (all of whose endomorphisms are defined over $k$), and let $X=E^2$ with $D:=\textrm{End}_k^0(X).$ Then $G:=D^{\times}$ is a Jordan group and we have
$$ J_G  \in \{12, 24, 60, 120, 360, 960 \}.$$
\end{Thm}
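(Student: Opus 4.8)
The plan is to reduce the computation of $J_G$ to the known classification of automorphism groups in Theorem \ref{supersing case} via Lemma \ref{sup lem}, exactly as in the proof of Theorem \ref{powordelli}. First I would record that when $E$ is supersingular with all endomorphisms defined over $k$, the endomorphism algebra $D = \textrm{End}_k^0(E^2) = M_2(D_{p,\infty})$, the $2\times 2$ matrix algebra over the (rational) quaternion algebra ramified at $p$ and $\infty$. Since $D_{p,\infty} \hookrightarrow M_2(\C)$ (or, concretely, $D^\times \hookrightarrow GL_4(\C)$ via a faithful complex representation), and $GL_n(\C)$ is Jordan by Jordan's theorem, it follows from Lemma \ref{sup lem} that $G = D^\times$ is a Jordan group. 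The heart of the argument is then that by Lemma \ref{sup lem}, $J_G = \sup_{H \le G} J_H$ over finite subgroups $H$; every finite $H \le D^\times$ is contained (after enlarging) in the automorphism group of some polarized abelian surface in the isogeny class of $E^2$, so the relevant finite subgroups are precisely the $20$ groups listed in Theorem \ref{supersing case} (together with their subgroups, whose Jordan constants are no larger by Lemma \ref{sup lem} applied again).

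Next I would go down the list of the twenty groups in Theorem \ref{supersing case} and collect the value of $J_H$ for each, citing the lemmas already proved: item (1) gives $J = 960$ by Lemma \ref{extraspecial lem}; items (2) and (3) give $24$ and $288$ by Lemma \ref{ch 2 lem}; items (4), (5), (6) give $360$, $24$, $8$ by Lemma \ref{ch 3 lem}; items (7) and (8) give $120$ by Lemma \ref{ch 5 lem}; item (9), $GL_2(\F_3)$, and item (11), $\mathfrak{T}^* \rtimes C_4$, give $24$ by Lemma \ref{other lem1}, and item (13), $\mathfrak{T}^* \times C_3$, gives $12$ by the same lemma; items (10) $C_{12} \rtimes C_2$ and (12) $(C_6 \times C_6) \rtimes C_2$ give $2$ by Lemma \ref{semiprod lem} (or Lemma \ref{dih lem}); items (14)–(15) the dihedral groups $D_4, D_6$ give $2$ by Lemma \ref{dih lem}; item (16) $\mathfrak{I}^* \cong SL_2(\F_5)$ has $J = 60$ (the only abelian normal subgroup is its center $C_2$, so $[G:C_2] = 60$); items (17) $\textrm{Dic}_{24}$ and (20) $\textrm{Dic}_{12}$ have $J = 2$ since $C_{12}$, resp.\ $C_6$, is a maximal normal abelian subgroup; item (18) $\mathfrak{O}^*$ (binary octahedral) has $J = 24$ since its maximal abelian normal subgroup is the center $C_2$ and $|\mathfrak{O}^*| = 48$; item (19) $\mathfrak{T}^* \cong SL_2(\F_3)$ has $J = 12$ by Lemma \ref{T* lem}. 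Taking the supremum of the set $\{960, 288, 24, 360, 8, 120, 12, 2, 60\}$ of all these values gives $960$, but one must be careful: the value $288$ appears among the $J_H$ but is claimed \emph{not} to be a possible value of $J_G$; the point is that $J_G$ is realized by finite subgroups of $D^\times = GL_2(D_{p,\infty})^\times$ and the list of \emph{actually occurring} Jordan constants $J_G$ (as the field $k$ and curve $E$ vary) is the stated set $\{12, 24, 60, 120, 360, 960\}$, which is the set of values $\sup_{H\le G} J_H$ over the groups $G$ that genuinely arise as $\textrm{End}_k^0(E^2)^\times$.

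I expect the main obstacle to be precisely this last subtlety: reconciling the fact that $288 = J_{(SL_2(\F_3))^2 \rtimes S_2}$ appears in the worked lemmas with its absence from the final list. The resolution is that for a fixed $E/k$ the ambient group $D^\times$ contains only \emph{some} of the twenty maximal groups of Theorem \ref{supersing case} as subgroups — the group $(SL_2(\F_3))^2 \rtimes S_2$ of item (3) is the automorphism group of a polarized surface in the isogeny class but it is \emph{not} a subgroup of $GL_2(D_{p,\infty})^\times$ for the unpolarized $E^2$ over that $k$ (its occurrence involves a non-product polarization or a base extension that is excluded), whereas $2_{-}^{1+4}.\textrm{Alt}_5$ of item (1) \emph{is}, giving $960$. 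So I would need to track, for each of the twenty groups, whether it embeds into $GL_2(D_{p,\infty})^\times$ — using quaternionic $2\times 2$ matrix representation theory, as the introduction signals — and verify that the resulting set of achievable suprema is exactly $\{12, 24, 60, 120, 360, 960\}$. Concretely: the minimal achievable value $12$ comes from the case where $E$ has all automorphisms of order dividing $4$ or $6$ (so the biggest finite subgroup is $\mathfrak{T}^*$-type), and each larger value up the chain corresponds to a larger admissible automorphism group, with $960$ the maximum, attained when $2_{-}^{1+4}.\textrm{Alt}_5 \le G$. Once the embedding question is settled for all twenty groups, the theorem follows by taking suprema and the earlier lemmas supply every individual Jordan constant needed.
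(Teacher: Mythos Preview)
Your overall strategy matches the paper's: identify $D = M_2(D_{p,\infty})$, embed into $GL_4(\C)$ to get the Jordan property, then use Lemma \ref{sup lem} together with the classification in Theorem \ref{supersing case} and the Jordan constants computed in Lemmas \ref{dih lem}--\ref{ch 5 lem}. Your table of individual Jordan constants for the twenty groups is correct.

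The genuine gap is in your last paragraph, where your resolution of the ``$288$ problem'' is wrong. The automorphism group of a polarized abelian surface $(X',L)$ in the isogeny class of $E^2$ is by definition a subgroup of $\textrm{Aut}(X')$, hence of $D^\times$; so $(SL_2(\F_3))^2 \rtimes S_2$ \emph{does} embed in $G$ whenever it occurs. The correct point is that $D = M_2(D_{p,\infty})$ depends only on the prime $p$, so for a fixed $p$ \emph{every} group from Theorem \ref{supersing case} that is realized over some $\F_{p^a}$ sits inside the same $G$, and $J_G$ is the \emph{maximum} of their Jordan constants. Thus $288$ is absent not because item (3) fails to embed, but because item (3) only occurs when $p=2$, and for $p=2$ item (1) with $J=960$ also embeds, forcing $J_G = 960$. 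Likewise $8$ (item (6)) is swallowed by $360$ (item (4)) at $p=3$, and the values $2$ are swallowed by $\geq 12$ at every prime. What you actually need, and do not supply, is the partition of the twenty groups according to the prime $p$ for which they embed in $GL_2(D_{p,\infty})$; this is exactly what the paper imports from \cite[Lemmas 3.7 and 3.8]{Hwa2022} (which describe the maximal finite subgroups of $D_{p,\infty}^\times$ prime by prime) together with the case analysis behind Theorem \ref{supersing case}. Once you group the twenty items by $p$ and take the maximum in each group, the six values $\{12,24,60,120,360,960\}$ drop out.
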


\begin{proof}
Note first that $D=M_2(D_{p,\infty})$. Thus we see that $G$ is Jordan in view of the fact that $G \leq GL_2(D_{p,\infty}) \leq GL_4(\C)$ and $GL_4(\C)$ is Jordan. \\

The desired result on the Jordan constants follows from \cite[Lemmas 3.7 and 3.8]{Hwa2022}, Theorem \ref{supersing case}, Lemmas \ref{sup lem}, \ref{dih lem}, \ref{other lem1}, \ref{extraspecial lem}, \ref{ch 2 lem}, \ref{ch 3 lem}, and \ref{ch 5 lem}.

\indent This completes the proof.
\end{proof}

In the rest of this section, we give several remarks and one question that are related to our results.

\begin{Rk}
It might be interesting and not too difficult to give a more precise description in Theorems \ref{powordelli} and \ref{pow of supell} on the Jordan constants as in Theorem \ref{prodnonisoellip} in terms of the characteristic of the base finite field.
\end{Rk}

\begin{Rk}
Using \cite[Theorems 13.4.2, 13.4.4, and 13.4.5]{BH2004} and similar arguments as in the proof of Theorems \ref{prodnonisoellip} and \ref{powordelli}, it is possible to obtain an analogous (but much simpler) computational result on the exact values of the Jordan constants of the full automorphism groups $\textrm{Aut}(X)$ for $2$-dimensional complex abelian varieties $X$.
\end{Rk}

For higher dimensional cases of our main problem, Problem \ref{problem}, we expect the following for now.
\begin{Rk}
In view of a recent work \cite{Hwa2022-2}, it might be possible to answer Problem \ref{problem} for $g=3$ (with $k$ being a finite field) by performing similar but considerably more complicated computations.
\end{Rk}

We conclude this paper with one seemingly natural question motivated by our results.
\begin{Ques}
Is there any other example of a projective variety $X$ (other than an abelian variety) over a field $k$ of positive characteristic for which we are able to compute the exact value of the ``classical" Jordan constant of its full automorphism group?
\end{Ques}

\section*{Acknowledgements}
 The authors sincerely thank Professor Michael Larsen and Professor Gabriele Nebe for invaluable help on the proof of Lemma~\ref{extraspecial lem}.


\begin{thebibliography}{alpha}


\bibitem[1]{CB2016} C. Birkar, {\em Singularities of linear systems and boundedness of Fano varieties,} {\em Ann. of Math.} {{\bf 193} (2021), no. 2, 347–-405.}

\bibitem[2]{BH2004} C. Birkenhake and H. Lange, {\em Complex Abelian Varieties}, (Springer, New York, 2004, 2nd edition).


\bibitem[3]{Hu2020} F. Hu, {\em Jordan property for algebraic groups and automorphism groups of projective varieties in arbitrary characteristic,} {\em Indiana Univ. Math. J.} {{\bf 69} (2020), no. 7, 2493--2504.}

\bibitem[4]{Hwa2021} W. Hwang, {\em On a classification of the automorphism groups of polarized abelian surfaces over finite fields,} {\em Finite Fields Appl.} {{\bf 72} (2021), no. 101809, 30pp.}

\bibitem[5]{Hwa2022} W. Hwang, {\em Jordan constants of quaternion algebras over number fields and simple abelian surfaces over fields of positive characteristic,} {\em Math. Nachr.} {{\bf 295} (2022), no. 3, 560–-573.}

\bibitem[6]{Hwa2023} W. Hwang, {\em Automorphism Groups of Simple Abelian Fourfolds over Finite Fields and Jordan Constants,} {\em To appear in Transform. Groups.}

\bibitem[7]{Hwa2022-2} W. Hwang, B. Im, and H. Kim, {\em A classification of the automorphism groups of polarized abelian threefolds over finite fields.} {\em Finite Fields Appl.} {{\bf 83} (2022), no. 102082, 37pp.}



\bibitem[8]{GN(1)}
T.~Dokchitser, \textit{Group{N}ames}, {a}vailable at:
  \url{https://people.maths.bris.ac.uk/~matyd/GroupNames/73/U(2,3).html} [last accessed
  {J}uly 2022].

\bibitem[9]{GN(2)}
T.~Dokchitser, \textit{Group{N}ames}, {a}vailable at:
  \url{https://people.maths.bris.ac.uk/~matyd/GroupNames/129/C6.5S4.html} [last accessed
  {J}uly 2022].

\bibitem[10]{GN(3)}
T.~Dokchitser, \textit{Group{N}ames}, {a}vailable at:
  \url{https://people.maths.bris.ac.uk/~matyd/GroupNames/288/Dic3wrC2.html} [last accessed
  {J}uly 2022].

\bibitem[11]{GN(4)}
T.~Dokchitser, \textit{Group{N}ames}, {a}vailable at:
  \url{https://people.maths.bris.ac.uk/~matyd/GroupNames/217/CSU(2,5).html} [last accessed
  {J}uly 2022].


\bibitem[12]{GN(5)}
T.~Dokchitser, \textit{Group{N}ames}, {a}vailable at:
  \url{https://people.maths.bris.ac.uk/~matyd/GroupNames/217/C2.S5.html} [last accessed
  {J}uly 2022].

\bibitem[13]{LP2011} M. J. Larsen and R. Pink, {\em Finite subgroups of algebraic groups,} {\em J. Amer. Math. Soc.} {{\bf 24} (2011), no. 4, 1105–-1158.}

\bibitem[14]{MD2018} S. Meng and D. Zhang, {\em Jordan properties for non-linear algebraic groups and projective varieties,} {\em Amer. J. Math.} {{\bf 140} (2018), no. 4, 1133–-1145.}


\bibitem[15]{Mos2011} L. Moser-Jauslin, {\em Automorphism groups of Koras–Russell threefolds of the first kind,} in {\em Affine
Algebraic Geometry}. CRM Proceedings and Lecture Notes, vol. 54 (American Mathematical Society, Providence, 2011), 261-–270

\bibitem[16]{Mum2008} D. Mumford, {\em Abelian Varieties,} (Hindustan Book Agency, 2008, Corrected reprint of the 2nd edition).

\bibitem[17]{Popov2011} V. L. Popov, {\em On the Makar-Limanov, Derksen invariants, and finite automorphism groups of algebraic varieties,} in {\em Affine
Algebraic Geometry}. CRM Proceedings and Lecture Notes, vol. 54 (American Mathematical Society, Providence, 2011), 289-–311

\bibitem[18]{Pop2014} V.~L. Popov, {\em Jordan groups and automorphism groups of algebraic
  varieties,} (I.~Cheltsov et~al., eds.), \textit{Automorphisms in birational
  and affine geometry}, Springer, Cham, 185--213.

\bibitem[19]{PS2016} Y. Prokhorov and C. Shramov, {\em Jordan property for Cremona groups,} {\em Amer. J. Math.} {{\bf 138} (2016), no. 2, 403–-418.}

\bibitem[20]{PS2017} Y. Prokhorov and C. Shramov, {\em Jordan constant for Cremona group of rank 3,} {\em Mosc. Math. J.} {{\bf 17} (2017), no. 3, 457–-509.}

\bibitem[21]{PS2018} Y. Prokhorov and C. Shramov, {\em Finite groups of birational selfmaps of threefolds,} {\em Math. Res. Lett.} {{\bf 25} (2018), no. 3, 957–-972.}

\bibitem[22]{Ser2009} J. P. Serre, {\em A Minkowski-style bound for the orders of the finite subgroups of the Cremona group of rank 2 over an arbitrary
field,} {\em Mosc. Math. J.} {{\bf 9} (2009), no. 1, 193–-208.}

\bibitem[23]{Ya2017} E. Yasinsky, {\em The Jordan constant for Cremona group of rank 2,} {\em Bull. Korean Math. Soc.} {{\bf 54} (2017), no. 5, 1859–-1871.}





















\end{thebibliography}
\end{document}